\newtheorem{algorithm*}{Algorithm}{\bfseries}{\rmfamily}
\newtheorem{theorem}{Theorem}
\theoremstyle{definition}
\newtheorem{proposition*}{Proposition}
\newtheorem{definition}[theorem]{Definition}
\theoremstyle{remark}
\newcommand{\bt}{\begin{tabular}}
\newcommand{\et}{\end{tabular}}
\newcommand{\ba}{\begin{array}}
\newcommand{\ea}{\end{array}}
\newcommand{\bq}{\begin{eqnarray}}
\newcommand{\eq}{\end{eqnarray}}
\newcommand{\bqa}{\begin{eqnarray*}}
\newcommand{\eqa}{\end{eqnarray*}}
\newcommand{\ds}{\displaystyle}
\newcommand{\bs}{$\backslash$}
\def\nn{{\mathbb N}}
\def\ox{{\overline X}}
\def\xa{ }
\def\xb{ }
\def\xc{ }
\def\mca{\multicolumn{1}{c}{ }}
\def\mcb{\multicolumn{2}{c}{ }}
\def\mcc{\multicolumn{3}{c}{ }}
\def\mcd{\multicolumn{4}{c}{ }}
\begin{document}
\noindent{\large{\bf Friedman's ``Long Finite Sequences'':\\
The End of the Busy Beaver Contest
}}

\vspace*{5mm}
\noindent{Michael Vielhaber}\\
Hochschule Bremerhaven, FB2,
An der Karlstadt 8,  D--27568 Bremerhaven\\
{\tt vielhaber@gmail.com}

\vspace*{5mm}
\noindent{M\'onica del Pilar Canales Chac\'on}\\
MATEMATICVM, Valdivia, Chile\\
@matematicvm\\
{\tt monicadelpilar@gmail.com}

\vspace*{5mm}
\noindent{Sergio Jara Ceballos}\\
Facultad de Ingenier\'{\i}a, Universidad Austral de Chile, Valdivia, Chile\\
{\tt serjara.ing.mat@gmail.com}

\vspace*{1cm}

\begin{abstract}
  Harvey Friedman gives a comparatively short description of an ``unimaginably large'' number $n(3)$ , beyond {\it e.g.} the values
  $$  A(7,184)< A({7198},158386) < n(3)$$
  of Ackermann's function - but finite.
  
  We implement Friedman's combinatorial problem about subwords of words over a 3-letter alphabet on a family of Turing machines, which, starting on empty tape, run (more than) $n(3)$ steps, and then halt.
Examples include a (44,8) (symbol,state count) machine as well as a (276,2) and a (2,1840) one.

In total, there are at most 37022 non-trivial pairs $(n,m)$ with Busy Beaver values ${\tt BB(n,m)} < A(7198,158386).$

We give algorithms to map any $(|Q|,|E|)$ TM to another, where we can choose freely either $|Q'|\geq 2$ or $|E'|\geq 2$ (the case $|Q'|=2$ for empty initial tape is the tricky one). 

Given the size of $n(3)$ and the fact that these TMs are not {\it holdouts}, but assured to stop, Friedman's combinatorial problem provides a definite upper bound on what might ever be possible to achieve in the Busy Beaver contest.
We also treat $n(4)> A^{(A(187196))}(1)$.

{\bf Keywords:} Busy beaver, long finite sequences, Turing machine.
\end{abstract}

\subsection*{Introduction}

Harvey Friedman describes in \cite{F} the problem to decide for any alphabet $B$ of size $k\in\nn$ the length of a largest word $s\in B^*$, such that property 
$$(*) \nexists\ 1\leq i < j\leq n/2\colon  s^{(i)} :=s_is_{i+1}\dots s_{2i}\mbox{ is a subword of } s^{(j)} :=s_js_{j+1}\dots s_{2j}$$
is satisfied.

For  $k=1$ {\it i.e.} $B=\{1\}$, 111 satisfies $(*)$, since $s_2s_3s_4$ is not even defined, but $s=1111$, the only word in $B^4$, already violates $(*)$ for $i=1,j=2$.
Thus, $n(1)=3$.

Similarly, for $k=2$ and $B=\{1,2\}$, we find the word $s=11222111111$ of length 11 satisfying $(*)$, but all $2^{12}$ words of length 12 (and thus all larger ones) violate $(*)$ (see \cite[p.~3f.]{F}):
This gives us $n(1)=3, n(2)=11$.

How does it go on thereafter with $n(3), n(4),\dots$?
\\\\
Unexpected!
\\
Friedman \cite[Sect.~4]{F} first shows a lower bound of $n(3) > A(7,184)$  and in \cite[Sect.~6]{F} presents results by Dougherty yielding even 
$$n(3) > A({7198},158386).$$

This current lower bound of $A({7198},158386) $ is (by far!) larger than {\it e.g.}   $A(3,158386) = 2^{2^{\iddots^{2^2}}}$, with 158386 2's stacked exponentially.

One bit of $n(3)$ we know, though: The last one. Since all $n(k)$ are odd.

We describe Turing machines guaranteed to halt after more than $n(3)$ steps, starting with an empty tape for {\it all} non-trivial pairs $(n,m)\in \nn^2$ of (state, symbol) counts with only 37022 exceptions.
Thus the Busy Beaver contest is effectively a {\it finite} matter.

Section~1 covers the relevant work of Ackermann, Turing, Rad\'o and Friedman.
Section~2 describes the algorithm and an 8-symbol im\-plemen\-ta\-tion with 44 states.

In Sections~3 and 4, we show how to get either the symbol count or the state count down to 2, for {\it any} Turing machine.
Section~5 briefly treats the case $n(4)$ over 4 symbols and in Section~6, we obtain the main result:

All but at most 37022 non-trivial $(n,m)$ pairs have a {\tt BB(n,m)} value above $n(3)$, and at most 51671 {\tt BB(n,m)} lie below $n(4)$.


\section{Four Mathematicians and their Crucial Results}

\subsection{Wilhelm Ackermann, 1926: ``Ackermann~function''}

The (modified) Ackermann function \cite{Ackermann} used here  as defined in \cite{F} is $A(1,c) = 2c$, $A(f,1) = 2$, and recursively $A(f,c)=A({f-1},A(f,c-1))$, which is equivalent to
a $c$-fold nesting of $A(f-1,\cdot)$.

The parameter $f$ can be seen as defining a function family, while the counter $c$ is a pointer into this family.
Every sequence $A(f,\nn)$ is a sub\-sequen\-ce of the previous $A(f-1,\nn)$.
Increasing $f$ is what lets the values explode.

For a nice overview of {\it really} large numbers, even beyond $|\nn|=\aleph_0$, see {\tt https://sites.google.com/site/largenumbers/home}.

One easily obtains $A(1,c) = 2\cdot c,  A(2,c) = 2^c, 
A(3,c) = 2^{2^{\iddots^{2^2}}}$, with $c$ copies of 2 stacked onto each other.
Also, $A(f,1)=2, A(f,2)=4, \forall f.$

$A(4,3) 
= A(3,A(3,A(3,1)))
= A(3,A(3,2))
= A(3,4)
= 2^{2^{2^2}} = 65536$.

$A(4,4) 
= A(3,A(4,3))
= A(3,65536)= 2^{2^{\iddots^{2^2}}}$, $65536-1$ exponents.

Thus $A(4,5) = A(3,A(4,4))$ will be a tower of 2's, whose height is described by a tower of $65536$ 2's, ``and so on''.

$A(5,3) 
= A(4,A(5,2))
= A(4,4)$ as above, while

$A(5,4) 
= A(4,A(5,3))
= A(4,A(4,4))$
is an $A(4,4)$-fold  iterated evalua\-tion of $A(3,\cdot{})$ -- we have no idea of its value, and not even a means to visualize that number.

Friedman \cite[p.~106]{F} calls $A(5,5)$ an ``unimaginably large number''.
We have nothing to add.

\subsection{Alan M.~Turing, 1936: ``Turing Machine''}

We use Turing's invention \cite{Turing} with the following modifications:\\
-- no output tape or  F (figures) cells\\
-- bi-infinite tape, no (left) end markers needed or provided\\

There is a finite set $Q$ of states with $n := |Q|$, the fixed tape alphabet $E=\{0,1\}$ for Rad\'o's original problem, or any larger, but finite alphabet $E$  with $m := |E|$.

The three relevant functions are\\
$Q^+\colon Q\times E\to Q$ for the next state,\\
$E^+\colon Q\times E\to E$ for the new symbol to be written onto the tape, and\\
$D^+\colon Q\times E\to \{R,L\}$ for the movement of the tape's head.

\subsection{Rad\'o Tibor, 1962: ``Busy Beaver''}

First some nomenclature:

1. A Busy Beaver is {\it any} Turing machine, which, starting with an empty tape, eventually halts.
{\it When} it halts, is {\it not} relevant for being a Busy Beaver (see Rad\'o \cite{Rado}, Michel \cite[p.~4]{Michel}, or Green \cite{Green}).
The generalized form allows for larger symbol sets (tape alphabets) than the original $B=\{0,1\}$ of Rad\'o.

2. The Busy Beaver Contest suggested by Rad\'o consists in providing a TM configuation's QED$^+$ values and a purported halting time T. If said configuration on empty tape stops after exactly T steps, the entry is valid.

3. The current Busy Beaver Champion of its class $(n,m) = (|Q|,|E|)$ is the halting machine with -- up to the time -- largest T.

4. The Busy Beaver function {\tt BB(n)} = {\tt BB(n,2)} or {\tt BB(n,m)} is the time T of the {\tt BB} champion for the class {\tt (n,m)} --- provided it is proved by any means, {\it e.g.} exhaustive search, that no other machine in that class exists that might halt on empty tape and do so after more than $T$ steps.
Otherwise the T of the current champion is a lower bound for {\tt BB(n,m)}.

\begin{figure}[b!]
  \centering
    \bt{c|rrrrrrr}
    7&7&\\
    6&6&A(3,15)\\
    5&5&A(2,25)\\
    4&4&107&A(2,$2^{\lceil 15.5\rceil}$)\\
    3&3&21&A(2,56)&A(2,$2^{\lceil 15.4\rceil}$)\\
    2&2&4&38&A(2,21)&A(2,$2^{11}$)&A(2,$2^{15}$)\\
    1&1&1&1&1&1&1&1\\
    \cline{1-8}
    $n /  m$&1&2&3&4&5&6&7\\
    \et
    \caption{Known values or lower bounds for {\tt BB(n,m)}}\label{fig:BB}
\end{figure}

Known results are given in Figure~\ref{fig:BB}, see Michel \cite{Michel}.
Numbers are exact figures, values of the Ackermann function are lower bounds (with $A(2,2^{16})=A(3,4)$).

\subsection{Harvey Friedman, 2001: ``Long Finite Sequences''}

We shall show in this paper that there are at most 37022  non trivial ($|Q|,|E|\geq 2$) Busy Beaver contests, since all other pairs lead to a lower bound of $A({7198},158386)<{\tt BB(n,m)}$ steps for a halting configuration implementing Friedman's combinatorial problem ``{\it Long Finite Sequences}''.

Friedman considers finite words over a $k$ letter alphabet, in particular $k=3$ and $B=\{1,2,3\}$.
His crucial definition $(*)$ is:
\begin{quote}
A word $s=s_1s_2\dots s_n$ from $B^n$ of length $n$ satisfies property $(*)$ whenever the set of subwords $s^{(i)}=s_is_{i-1+1}\dots s_{2i}, 1\leq i\leq n/2$, that is $s_1s_2, s_2s_3s_4,\dots, (s_i\dots s_{2i}), \dots, (s_{n/2}\dots s_n)$, does \emph{not} contain two words with $i<j$ such that $s^{(i)}$ is a subword of  $s^{(j)}$.

A word $a_1\dots a_m$ is called a subword of $b_1\dots b_n$ whenever there are indices $1\leq \iota_1<\iota_2<\dots<\iota_m\leq n$ with $a_k=b_{\iota_k}$.
\end{quote}
For every $k\in\nn$, let $n(k)$ be the length of a largest word from $\{1,\dots,k\}^*$ satisfying $(*)$.
One easily verifies $n(1)=3$.

Friedman shows that $12221111111\in\{1,2\}^*$ satisfies $(*)$, but no larger word over two letters, and thus $n(2)=11$.

All $n(k)$ are odd, by the way.
The last odd indexed letter will not generate a new subword and thus the status of $(*)$ does not change.

After $n(1)=3$ and $n(2)=11$, we have quite a jump:

\begin{theorem} {\rm (Friedman\cite[Theorem 4.7]{F})}
  
$$n(3) > A(7,184).$$

\end{theorem}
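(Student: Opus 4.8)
\medskip
\noindent\textbf{A plan of proof.}
The plan is to construct an explicit word $w\in\{1,2,3\}^*$ that satisfies $(*)$ and has length $|w|>A(7,184)$; since $n(3)$ is the maximal length of such a word, the inequality follows. The construction will be recursive and engineered so that its length obeys a recursion of the same shape as $A(f,c)=A(f-1,A(f,c-1))$, so that passing from ``level'' $f$ to level $f+1$ iterates the previous growth rule and produces the Ackermann explosion; the base level reproduces the two-letter situation, where the witness $12221111111$ already shows one can reach length $11$.

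First I would fix the role of the letter $3$ as a rigid top-level scaffold. A bounded number of $3$'s, placed at the seams of the recursion, cut $w$ into long ``zones'' in which only the letters $1$ and $2$ occur. Every window $s^{(i)}=s_i\cdots s_{2i}$ then either (a) lies inside a single zone and is a $\{1,2\}$-word, or (b) is a ``boundary window'' that sees one or more of the $3$'s. For two type-(a) windows inside the same zone, $(*)$ is the two-letter base analysis; for a type-(a) window compared against a later window in a different zone, and for all type-(b) windows, non-containment is forced by a monotone statistic of the window --- essentially the multiset of $3$-positions it sees, together with the coarse shape (the maximal runs of $1$'s and $2$'s) around them --- which the layout arranges to be non-increasing as $i$ sweeps through the relevant range. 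This monotonicity is exactly what lets each new, longer recursion stage ``reset'' its $1,2$-bookkeeping without any earlier window reappearing as a subword of a later one.

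Concretely I would define, for each level $f\geq 1$ and counter $c\geq 1$, a finite word $W(f,c)$ over $\{1,2\}$ with a handful of $3$-markers glued in at the recursion seams, carrying an invariant strong enough to propagate itself: (i) $W(f,c)$ satisfies $(*)$; (ii) its final window $s^{(\lfloor n/2\rfloor)}$ has a prescribed signature (run pattern and $3$-count) that cannot be embedded as a subword of anything legally appended after it; (iii) $|W(f,c)|\geq A(f,c)$ up to a constant that is absorbed by padding with trailing $1$'s, which is harmless because a final, odd-indexed letter creates no new window and does not change the status of $(*)$. The recursion step assembles $W(f,c)$ from $c$ consecutive relabelled copies of the level-$(f-1)$ machinery, the $3$-markers preventing interference between the copies; composing the length estimates down the $f$-recursion and through the $c$ copies, and matching the small parameters to the base case, delivers after the bookkeeping the explicit choice $f=7$, $c=184$.

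The main obstacle is establishing (i)--(ii) through the recursive substitution. ``Subword'' here means subsequence, which is extremely permissive: a short window can thread its letters through a long window in very many ways, so the invariant must carry precisely the right rigidity --- tracking not merely letter counts but the coarse geometry of the window (the pattern of maximal runs and the placement of any $3$'s) --- in order to defeat every such embedding simultaneously, for all pairs $i<j$, within zones and across zones, and to remain true after $c$ copies are spliced together. Pinning down that invariant and checking it is preserved by the substitution is the whole mathematical content; once it is in place, the Ackermann-rate length bound, and hence $n(3)>A(7,184)$, is comparatively routine arithmetic.
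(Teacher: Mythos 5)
There is a genuine gap, and it is worth noting first that the paper itself does not prove this statement at all: it is quoted verbatim from Friedman, and the paper's ``proof'' is simply the citation of \cite[Theorem~4.7]{F}. Your text, by contrast, is an attempt to reconstruct Friedman's own lower-bound argument, and as you yourself concede in the final paragraph, the decisive ingredient is missing. The entire mathematical content of the theorem is the invariant that survives the recursive substitution --- a property of the windows $s^{(i)}$ strong enough to exclude \emph{every} subsequence embedding of an earlier window into a later one, within a zone, across zones, and across the $c$ spliced copies, and which simultaneously forces the length to satisfy an Ackermann-type recursion with the concrete parameters $f=7$, $c=184$. You name the shape such an invariant should have (run patterns, placement of the $3$-markers, a monotone statistic), but you neither define it nor verify a single preservation step, so no length bound is actually established and the inequality $n(3)>A(7,184)$ is not proved. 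A plan at this level of generality could equally well be written down for a false bound; the parameters $7$ and $184$ must come out of the verified bookkeeping, not be ``matched'' at the end.

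One concrete step in your outline is also wrong as stated: the claim in (iii) that a length deficit can be ``absorbed by padding with trailing $1$'s, which is harmless because a final, odd-indexed letter creates no new window.'' That remark (made in the paper) covers appending a \emph{single} letter at an odd position; it does not license arbitrary padding. Appending many $1$'s increases $n$, hence creates new windows $s^{(i)}$ for larger $i$, and any two windows lying entirely inside the trailing run of $1$'s are words $1^{i+1}$ and $1^{j+1}$ with $i<j$, the first a subword of the second, violating $(*)$ immediately (this is exactly why $n(1)=3$). So the padding device cannot be used to repair the length estimate, and the recursion must deliver the bound exactly, which again requires the unproven invariant. As it stands, the honest route for this paper is the one it takes: cite Friedman's Theorem 4.7 rather than reprove it.
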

\begin{proof} See Theorem 4.7 in \cite{F}.\end{proof}

This paper will implement the search for the first word not satisfying $(*)$ over $B=\{1,2,3\}$ and this search will take more than $n(3)$ steps -- way above the ``incomprehensibly large number'' $A(5,5)$ -- and then halt.

Dougherty even obtains the following fantastically large bound:
\begin{theorem}
{\rm (Friedman\cite[Theorem 6.9]{F})}
$$n(3) > A({7198},158386).$$
\end{theorem}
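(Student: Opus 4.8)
The plan is to prove $n(3) > A(7198,158386)$ by exhibiting an explicit word $s \in \{1,2,3\}^*$ of length exceeding $2\,A(7198,158386)$ that satisfies $(*)$; by the very definition of $n(3)$ this suffices. Recall that $(*)$ asks precisely that the sequence of windows $s^{(1)}, s^{(2)}, \dots, s^{(\lfloor n/2\rfloor)}$ be a \emph{bad sequence} for the subword ($=$ subsequence) order, under the rigid geometric constraint $s^{(i)} = s_i s_{i+1}\cdots s_{2i}$, so that consecutive windows overlap in all but two letters while the available length $|s^{(i)}| = i+1$ grows with $i$. I would construct $s$ in nested phases, taking a parameterized strengthening of the two-letter bound $n(2)=11$ as the base case and the construction underlying Friedman's $n(3)>A(7,184)$ (Theorem~4.7, cited above) as the prototype to be optimized.

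The structural idea is to write $s$ as an alternation $3^{a_1} w_1\, 3^{a_2} w_2 \cdots 3^{a_r} w_r$ with each $w_\ell \in \{1,2\}^*$, and to exploit two elementary facts: (i) a window containing at least $t$ copies of $3$ can never be a subword of a later window containing fewer than $t$ copies of $3$; and (ii) on any stretch free of $3$'s the windows form a bad sequence over $\{1,2\}$, whose maximal length is governed as in the $n(2)$ analysis but now at the much larger scale set by the current position, since window $i$ is permitted length $i+1$. If the blocks $3^{a_\ell}$ are placed so that the number of $3$'s visible inside the $i$-th window is non-increasing as $i$ advances through the phases, then every inter-phase pair is automatically safe by (i), and only intra-phase pairs remain to be checked, which reduces the whole problem to nesting the two-letter construction inside itself.

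The reason this yields an Ackermann-sized lower bound is that spending one block of $3$'s drops the recursion one level while the remaining supply of $3$'s plays the role of the Ackermann counter, and because the length budget $i+1$ of window $i$ increases with position, each successive phase can be run at a scale that is an $A(f-1,\cdot)$-iterate of the scale of the previous one; making this precise gives (length attainable at level $f$ with counter $c$) bounded below by an $A(f-1,\cdot)$-style nesting, hence $n(3) \geq 2A(f,c)$ for whatever $(f,c)$ one manages to realize. Landing \emph{exactly} at $A(7198,158386)$ is then a matter of optimizing the trade-off between how much of the (severely limited) three-letter alphabet and of the window-length slack is invested in raising the level $f$ versus raising the counter $c$. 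That optimization --- squeezing the largest possible $(f,c)$ out of only three letters and the inflexible window shape $s^{(i)}=s_i\cdots s_{2i}$ while keeping the invariant ``the $3$-count is non-increasing, and within equal $3$-count the residual two-letter window sequence is bad'' intact --- is the delicate bookkeeping that constitutes Dougherty's improvement, and it is the step I expect to be the main obstacle; once the word is pinned down, verifying that it satisfies $(*)$ reduces to a lengthy but routine case analysis on where two given windows sit relative to the $3$-markers.
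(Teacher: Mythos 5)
There is a genuine gap: what you have written is a strategy sketch, not a proof, and the missing part is exactly the content of the theorem. The statement is a specific quantitative bound, $n(3) > A(7198,158386)$, and your argument defers the only step that could produce that number --- the ``delicate bookkeeping'' realizing level $f=7198$ with counter $c=158386$ out of a three-letter alphabet and the rigid windows $s^{(i)}=s_i\cdots s_{2i}$ --- explicitly labelling it the main obstacle. Without that construction (or at least a recursion with verified parameters), nothing in the sketch distinguishes the claimed bound from $A(7,184)$ or from any other value; indeed the paper under review does not prove the theorem either, but simply cites Friedman \cite[Theorem 6.9]{F}, where the improvement is credited to Dougherty's analysis. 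So as a blind proof the proposal cannot be accepted: it reproduces the general philosophy of Friedman's lower-bound argument for $n(3)>A(7,184)$ (block structure over $\{1,2,3\}$, a decreasing measure on the $3$'s, two-letter bad sequences at growing scales) but stops precisely where the cited theorem begins.

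Two further points in the sketch would need repair even at the qualitative level. First, your fact (i) only protects pairs where the earlier window contains \emph{strictly more} $3$'s than the later one; with a merely non-increasing $3$-count, inter-phase pairs with equal counts are not ``automatically safe'' and must be folded into the intra-phase analysis, which weakens the claimed reduction. Second, the invariant ``the number of $3$'s visible inside the $i$-th window is non-increasing'' is not something you can simply impose: window lengths grow linearly with position, so later windows sweep over more of the word and tend to capture \emph{more} $3$-blocks, and arranging the blocks $3^{a_\ell}$ so that the visible count still decreases (while simultaneously leaving enough room for the nested two-letter phases) is itself a substantial construction that the proposal does not supply.
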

\begin{proof} See Theorem 6.9 in \cite{F}.\end{proof}

Friedman furthermore conjectures \cite[p.~7]{F2} the upper bound
$$n(3) < A(A(5,5),A(5,5)).$$


\section{The Algorithm and an 8-Symbol Implementation}
\begin{figure}[b!]
  \centering
    \bt{rl}
    1.&Copy $s$ from II to III, N/2 times, separated by $+$.\\
    2.&In III, cut away initial triangle 
    $\varepsilon, s_1,s_1s_2,\dots, s_1\cdots s_{i-1},\dots , s_1\cdots s_{N/2-1}$.\\
    3.&In III, cut away double trailing triangle 
    $s_3\cdots s_{N},s_5\cdots s_{N},\dots,s_{N-1}s_{N},\varepsilon$,\\
    &leaving $s_1s_2,s_2s_3s_4,\dots, s^{(i)},\dots,s^{(N/2)}$ on the tape as III.\\
    4.&Remove the $(i-1)$ patterns $s^{(1)},\dots, s^{(i-1)}$.\\
    5.&Check $s^{(i)},\dots, s^{(N/2)}$ for subword match, property $(*)$.\\
    6.&Clear segment III to X$\varepsilon$Y$^\omega$. IF match in l.~5, GOTO 7 ELSE GOTO 8.\\
    7.&$s++;$ IF $s=(1)^{N+1}$ HALT ELSE $i:=0,l:=0$. GOTO 1.\\
    8.&$i:=0; lmax++;$ IF $lmax < N/2$ GOTO 1 ELSE GOTO 9.\\
    9.&$(N/2)++$; $s:=(1)^N$; GOTO 1.\\
    \et
    \caption{Algorithm: Long Finite Sequences}\label{Alg}  
\end{figure}
We shall start with the symbol set $E=\{Y,X,1,2,3,-,\$,+\}$, where $Y$ is the blank.

The active part of the tape is divided into 3 segments:

I Two unary counters $0\leq i \leq imax=N/2$ and $0\leq l\leq lmax \leq N/2$.

II The current word $s\in B^N$ in the form $s_1s_2\dots s_N\in \{1,2,3\}^N$ or certain symbols replaced by their prime equivalent, with $-\equiv 1', \$\equiv 2', +\equiv 3'$. 

III $N/2$ copies of $s$, separated by '+'s, which are then trimmed to\\
$s^{(1)} =(s_1s_2), \dots, s^{(i)} =(s_i..s_{2i}),\dots, s^{(N/2)} =(s_{N/2}..s_{N})$.

Segment bounds are given by the markers Y,Y,X,Y that is the whole tape has a structure like 
$${}^{\omega}{\rm Y}\cdots {\rm I}\cdots {\rm Y}\cdots {\rm II}\cdots {\rm X}\cdots {\rm III}\cdots {\rm Y}^\omega$$

This marker sequence, YYXY, has the advantage  -- defining the tape's blank symbol as Y -- that both ends are immersed within the 
${}^\omega$ Y...Y$^\omega$ of the bi-infinite tape and are thus automatically correct upon extension of segments.

The algorithm consists of 9 lines as given in Figure~\ref{Alg}.

States in $Q$ are named `ql-c', where $l \in \{1,\dots,9\}$ refers to the line of the algorithm, and $c\in\nn_0$ is just a counter within the line.

On pages 8--12, we describe the 44 states implementing the algorithm.
For each program line, we indicate the segment dealt with (I, II, or III) and the starting position by @I, l.h.s.~end of I to III@, r.h.s.~end of III.

Each entry gives the relevant symbols from $E=\{Y,X,-,\$,+,1,2,3\}$ in the first line; an asterisk '*' stands for all symbols not yet mentioned before.
An entry like $*\backslash 3,X$ stands for all other symbols, where 3 and X are not actually used (think $Q^+($q1-1,3)=$Q^+($q1-1,X) = ERROR).
Symbols that are omitted behave like the 3,X in  $*\backslash 3,X$:
They will not appear with this state.
The 2nd line of each entry gives the new symbol $E^+$ (`=' meaning no change in symbol) and the direction $D^+$, left (L) or right (R), of the tape head.
The third line is the next state $Q^+$.

We start with $i=l=0,$ I = II = III = $\varepsilon$ in line 3.

The TM starts in state q1-4, initializes a Y to X, then moves on to {q3-1},\linebreak q4-1, q4-2, q5-0, q5-1, q6-2, q8-0, q8-1, q8-2, q9-1 (tape is empty, all Y, except that one X), and in line 9 we start to increase the support from empty to length $N/2=1$ in Segment I, and $s=--\equiv 1'1'$ in Segment II.

State q7-1 is the finishing state, going into HALT with Y.

States q9-3 and q9-5 have been merged into q3-3 and q1-9, respectively, to save on state count.
In q9-3 and q9-5 we only deal with symbols ``--'' and ``Y'', which are absent in (the original) q3-3 and q1-9.

1,2,3 stand for themselves, also $-$,\$,+ stand for 1',2',3' in Segment II.

\begin{figure}
  \centering
  \bt{l|cccccc}
  \multicolumn{7}{l}{Definition:}\\
  \cline{1-7}
  Symbol&--&\$&+&1&2&3\\
  $l$&--&0&1&--&0&1\\
  $i$&0&0&0&1&1&1\\
\et
\bt{c}
\ \ 
\et
\bt{l|cccccccccc}
\multicolumn{10}{l}{Example:}\\
\cline{1-10}
Symbol&{--}&{--}&1&2&2&3&3&3&3\\
$l$&{--}&{--}&{--}&0&0&1&1&1&1\\
$i$&0&0&1&1&1&1&1&1&1\\
\et
\caption{Counters $l$ and $i$ and their encoding \label{fig:I_N}}.
\end {figure}

The two counters $l,i$ in Segment I are coded as in Figure~\ref{fig:I_N}.
The example value is $i = 7$ with a maximum of $imax=9$, and $l = 4$ with a maximum of $lmax=6$ (the 0s can change to 1s, the '--' can not).

We get $|E|$ down to 7 by removing the marker X.
This requires to do a double scan 
$\dots \rightarrow Y\rightarrow Y$ instead of passing over the other symbol like $\dots \stackrel{X}{\longrightarrow}Y$ or 
$\dots \stackrel{Y}{\longrightarrow}X$,
and affects states  q1-C1, q1-C2, q1-C3, q1-6, q1-9, q2-1, q4-1, q4-3, q5-0, and q9-4, ten states in all.
Duplicating these states leads to an implementation with $|Q| = 54, |E| =7$ for $n(3)$.

We account for these 10 states in Section~3 by a parameter $\Delta$:\\
$\Delta=0$ for $X\in E, |E|=2k+2$ and
$\Delta=1$ for $X\not\in E, |E|=2k+1$.

\bt{c|c|c|c|c|c|l}
\multicolumn{7}{l}{1. @I.Copy $s$ from II to III, N/2 times, separated by $+$.}\\
q1-1&-- &{\$},+&1,2 &Y   &\mca&We increase $i\leq N/2$ in unary\\
\cline{2-5}
\xb &1,R &2,R   &=,R &=,R &\mca&+,2: already increased\\
    &q1-2&q1-2  &q1-1&q2-1&\mca&--,1: i++. Y: end of number $i=N/2$\\
\cline{1-5}
q1-2&Y   &*\bs X&\mcc&Find right hand side of  I \\
\cline{2-3}
\xb &=,R &=,R   &\mcc&after i++\\
    &q1-4&q1-2  &\mcc&\\
\cline{1-6}
q1-4&--  &\$   &+    &1,2,3&X   &Remove prime from h = --,\$,+$\equiv$1',2',3'\\
\cline{2-6}
\xb &1,R  &2,R  &3,R  &=,R  &=,R &and GOTO respective state q1-Ch\\
    &q1-C1&q1-C2&q1-C3&q1-4 &q1-7&X: end of $s_i$\\
\cline{2-6}
    &[Y] &\mcd&Y: Part of INIT, not q1-4\\
    &X,R &\mcd&\\
    &q3-1&\mcd&\\
\cline{1-3}
q1-C1&Y   &*    &\mcc&Find rhs of III and put symbol 1\\
\cline{2-3}
\xb  &1,L &=,R  &\mcc&\\
     &q1-6&q1-C1&\mcc&\\
\cline{1-3}
q1-C2&Y   &*    &\mcc&... symbol 2\\
\cline{2-3}
\xb  &2,L &=,R  &\mcc&\\
     &q1-6&q1-C2&\mcc&\\
\cline{1-3}
q1-C3&Y   &*    &\mcc&...  symbol 3\\
\cline{2-3}
\xb  &3,L &=,R  &\mcc&\\
     &q1-6&q1-C3&\mcc&\\
\cline{1-3}
q1-6&Y   &*   &\mcc&Find lhs of II $=s_1s_2\dots$\\
\cline{2-3}
\xa &=,R &=,L &\mcc&\\
    &q1-4&q1-6&\mcc&\\
\cline{1-3}
q1-7&Y   &*\bs \$,X&\mcc&Find rhs of III and put\\
\cline{2-3}
\xb &+,L &=,R      &\mcc&interword gap +\\
    &q1-9&q1-7     &\mcc&\\
\cline{1-4}
q1-9&X    &[Y,--] &*\bs \$&\mcb&Find rhs of II\\
\cline{2-4}
\xc &X,L  &X,L  &=,L    &\mcb&Y,--: MERGE from q9-5\\
    &q1-10&q1-10&q1-9   &\mcb&\\
\cline{1-6}
q1-10&1   &2    &3    &Y    &*\bs\$&Mark 1,2,3 as --,\$,+$\equiv$1',2',3'\\
\cline{2-6}
\xa &--,L&\$,L &+,L  &=,L  &=,L   &\\
    &q1-10&q1-10&q1-10&q1-11&q1-10 &\\
\cline{1-6}
q1-11&Y  &*    &\mcc&Find lhs of I\\
\cline{2-3}
\xa &=,R &=,L  &\mcc&\\
    &q1-1&q1-11&\mcc&\\
\cline{1-3}
\et

\bt{c|c|c|c|c|c|l} 
\multicolumn{7}{l}{2. @II. Cut away left triangle in III.}\\
\cline{1-3}
q2-1&Y   &*   &\mcc&Find rhs of III\\
\cline{2-3}
\xb &=,L &=,R &\mcc&\\
    &q2-2&q2-1&\mcc&\\
\cline{1-4}
q2-2&X   &+    &*\bs Y &\mcb&Change last interword gap + to \$\\
\cline{2-4}
\xa &=,R &\$,R &=,L    &\mcb&\\
    &q3-1&q2-3 &q2-2   &\mcb&\\
\cline{1-4}
q2-3&1,2,3&Y   &--   &\mcb&Clear one symbol to --\\
\cline{2-4}
\xb &--,R &=,L &=,R  &\mcb&\\
    &q2-4 &q2-2&q2-3 &\mcb&\\
\cline{1-4}
q2-4&--  &Y   &*\bs +,X&\mcb&Find next unfinished triangle to the right\\
\cline{2-4}
\xb &--,R&=,L &=,R    &\mcb&Y: all tringles to the right finished\\
    &q2-3&q2-2&q2-4   &\mcb&\\
\cline{1-4}
\cline{1-7}
\multicolumn{7}{l}{ }\vspace*{-3mm}\\
\multicolumn{7}{l}{3. @III.Cut away double right triangle in III.}\\
\cline{1-4}
q3-1&{\$}&Y   &*\bs X &\mcb&Find first interword gap \$\\
\cline{2-4}
\xb &+,R &=,L &=,R    &\mcb& from the left. Change to +\\
    &q3-5&q4-1&q3-1   &\mcb&work from here to rhs of III\\
\cline{1-4}
q3-5&Y   &--  &+   &\mcb&Necessary in case the \$$\to$ + in q3-1\\
\cline{2-4}
\xc &=,L &=,L &=,L &\mcb&is directly left of the Y marker\\
    &q4-1&q3-5&q3-2&\mcb&\\
\cline{1-4}
q3-2&1,2,3&*\bs XY\$&\mcc&Skip intermediate --,+ then clear one symbol to --\\
\cline{2-3}
\xa &--,L &=,L      &\mcc&\\
    &q3-3 &q3-2     &\mcc&\\
\cline{1-4}
q3-3&1,2,3&[Y] &[--]&\mcb&Clear another symbol to --\\
\cline{2-4}
\xc &--,L &=,R &=,R  &\mcb&Y,--: MERGE from q9-3\\
    &q3-4 &q9-4&q3-3 &\mcb&(+,X,\$ will not appear)\\
\cline{1-4}
q3-4&--,+&X   &*\bs Y,\$&\mcb&Go left before next word\\
\cline{2-4}
\xa &=,L &=,R &=,L   &\mcb&\\
    &q3-2&q3-1&q3-4  &\mcb&\\
\cline{1-4}
\et

Symmetry between q2 and q3:

$Y\leftrightarrow X, R \leftrightarrow L$,
q2-2$\equiv$q3-1,
q2-3$\equiv$(q3-2,q3-3),
q2-4$\equiv$q3-4.

\bt{c|c|c|c|c|c|l} 
\multicolumn{7}{l}{4. III@. Remove $(i-1)$ patterns in III.}\\
\cline{1-3}
q4-1&Y   &*   &\mcc&Find rhs end of I\\
\cline{2-3}
\xa &=,L &=,L &\mcc&\\
    &q4-2&q4-1&\mcc&\\
\cline{1-5}
q4-2&{\$}&2   &1,3,--,+ &Y\bs X&\mca &Increase $l:=1,\dots,i$ (unary) in I\\
\cline{2-5}
\xa &+,R &3,R &=,L      &=,R   &\mca&If Y, done: $l==i==lmax$ \\
    &q4-3&q4-3&q4-2     &q5-0  &\mca&\\
\cline{1-5}
q4-3&X   &*   &\mcc&Find lhs end of III\\
\cline{2-3}
\xb &=,R &=,R &\mcc&\\
    &q4-4&q4-3&\mcc&\\
\cline{1-3}
q4-4&+   &*\bs X,Y,\$&\mcc&Clear all 1,2,3 to -- until ...\\
\cline{2-3}
&--,L&--,R&\mcc&+: end of pattern\\
\xb &q4-1&q4-4&\mcc&\\
\cline{1-7}
\multicolumn{7}{l}{ }\vspace*{-3mm}\\
\multicolumn{7}{l}{5. III, starts at I@. Check patterns for $s^{(i)} \subset s^{(j)}, j>i$.}\\
\cline{1-3}
q5-0&X   &*   &\mcc&Find lhs end of III\\
\cline{2-3}
\xb &=,R &=,R &\mcc&\\
    &q5-1&q5-0&\mcc&\\
\cline{1-4}
q5-1&1    &2    &3    &\mcb&Take and delete \\
\cline{2-4}
\xb &--,R &--,R &--,R &\mcb&leftmost symbol from $s^{(i)}$\\
    &q5-V1&q5-V2&q5-V3&\mcb&\\
\cline{2-4}
    &-- &+   &Y   &\mcb&\\
\cline{2-4}
    &=,R &+,R &=,L &\mcb&+: this $i$ done\\
    &q5-1&q6-1&q6-2&\mcb&Y: same, $i==N/2$\\
\cline{1-4}
q5-{\bf Vh}&--,+ &Y   &*    &\mcb&Skip rest of $s^{(i)}$ or $s^{(j)}$\\
\cline{2-4}
\xb &=,R  &=,L &=,R  &\mcb&${\bf h} \in\{ 1,..,k\}$ is symbol\\
    &q5-{\bf Kh}&q5-2&q5-{\bf Vh}&\mcb&\\
\cline{1-5}
q5-K1&1   &+    &Y   &*    &\mca&Clear all non-matching in all $s^{(j)}$\\
\cline{2-5}
\xb &\$,R &=,R  &=,L &--,R &\mca&until match 1, set to \$,\\
    &q5-V1&q5-K1&q5-2&q5-K1&\mca&then skip to next word in q5-Vh\\
\cline{1-5}
q5-K2&2   &+    &Y   &*    &\mca&Clear all others until match 2\\
\cline{2-5}
\xb &\$,R &=,R  &=,L &--,R &\mca&\\
    &q5-V2&q5-K2&q5-2&q5-K2&\mca&\\
\cline{1-5}
q5-K3&3   &+    &Y   &*    &\mca&Clear all others until match 3\\
\cline{2-5}
\xb &\$,R &=,R  &=,L &--,R &\mca&\\
    &q5-V3&q5-K3&q5-2&q5-K3&\mca&\\
\cline{1-5}
q5-2&X   &*   &\mcc&Go to lhs of III\\
\cline{2-3}
\xa &=,R &=,L &\mcc&\\
    &q5-1&q5-2&\mcc&\\
\cline{1-3}
\et

\bt{c|c|c|c|c|c|l} 
\multicolumn{7}{l}{6. in III. Remove III to $\varepsilon$Y$^\omega$. Check for match  $s^{(i)}\subset s^{(j)}$.}\\
\cline{1-3}
q6-1&Y   &*   &\mcc&Find rhs end of III\\
\cline{2-3}
\xb &=,L &=,R &\mcc&\\
    &q6-2&q6-1&\mcc&\\
\cline{1-4}
q6-2&\$  &X   & *  &\mcb&\$: Match!\\
\cline{2-4}
\xa &Y,L &=,L &Y,L &\mcb&X: III == $\varepsilon$Y$^\omega$, NO match\\
    &q6-3&q8-0&q6-2&\mcb&*: fill with blanks (Y), NO match\\
\cline{1-4}
q6-3&X   &*   &\mcc&X: III == $\varepsilon$Y$^\omega$, match!\\
\cline{2-3}
\xa &=,L &Y,L &\mcc&*: fill with blanks (Y)\\
    &q7-1&q6-3&\mcc&same as q6-2, but match!\\
\cline{1-3}
\multicolumn{7}{l}{The {\$} is the last symbol from $s^{(i)}$ matching some symbol from $s^{(j)}$,}\\
\multicolumn{7}{l}{implying that all previous symbols from  $s^{(i)}$ were also matched.}\\
\cline{1-7}
\multicolumn{7}{l}{ }\vspace*{-3mm}\\
\multicolumn{7}{l}{7. II@.  $s++$, HALT if $s=3^N$, otherwise $i := 0; j := 0$.}\\
\cline{1-5}
q7-1&-- &\$   &+ &Y &\mca&$s++$, 1'$\to$2'$\to$3'$\to$(1'+carry)\\
\cline{2-5}
\xa &\$,L &+,L &--,L &=,R &\mca&HALT is the actual HALT after reaching $n_3$!\\
    &q7-3&q7-3&q7-1 &HALT &\mca&Replace \$ by(--,L,q7-1) for $|B|=2$\\
\cline{1-5}
q7-3&Y   &*   &\mcc&Find lhs of II\\
\cline{2-3}
\xa &=,L &=,L &\mcc&\\
    &q7-4&q7-3&\mcc&\\
\cline{1-3}
q7-4&Y   &*\bs --&\mcc&i := 0; j := 0. Goto lhs of I\\
\cline{2-3}
\xa &=,R &--,L  &\mcc&\\
    &q1-1&q7-4  &\mcc&\\
\cline{1-3}
\cline{1-7}
\multicolumn{7}{l}{ }\vspace*{-3mm}\\
\multicolumn{7}{l}{8. @III. $i:=0, lmax++$. IF $lmax < N/2$ GOTO 1 ELSE GOTO 9.}\\
\cline{1-3}
q8-0&Y   &*   &\mcc&Find rhs of I\\
\cline{2-3}
\xc &=,L &=,L &\mcc&\\
    &q8-1&q8-0&\mcc&\\
\cline{1-6}
q8-1&--,\$,+&1   &2   &3   &Y\bs X&$i := 0$, $l$ unchanged\\
\cline{2-6}
\xa &=,L   &--,L&\$,L&+,L &=,R   &\\
    &q8-1  &q8-1&q8-1&q8-1&q8-2  &\\
\cline{1-6}
q8-2&-- & Y   &*   &\mcb&$-\to\$: lmax$++ (support for $l$), GOTO 1\\
\cline{2-4}
\xb &\$,L &=,L &=,R &\mcb&Y: Since $lmax$==N/2, GOTO 9\\
    &q1-11&q9-1&q8-2&\mcb&\\
\cline{1-4}
\et

\bt{c|c|c|c|c|c|l} 
\multicolumn{7}{l}{9. I@.  $(N/2==imax)++,$ sXY$\to$s--XY, $i := 0, l := 0$.}\\
\cline{1-3}
q9-1&Y   &*\bs X&\mcc\hspace*{3cm}&Find lhs of I. Extend I and thus\\
\cline{2-3}
    &--,R&--,L  &\mcc& N/2 by a new --. $i:= 0, l := 0$\\
    &q3-3&q9-1  &\mcc&Y: end, goes to q9-3 merged into q3-3\\
\cline{1-3}
{[q9-3]}&Y   &-- &\mcc&Find rhs of I\\
\cline{2-3}
       &=,R &=,R &\mcc&is merged into q3-3 \\
        &q9-4&q9-3&\mcc&\\
\cline{1-3}
q9-4&Y   &*&\mcc&Find rhs of III, extend by XY$\to --$\\
\cline{2-3}
    &--,R&--,R  &\mcc&Y: end, goes to q9-5 merged into q1-9\\
    &q1-9&q9-4  &\mcc&With q1-9, extend by  $-\equiv 1'$\\
\cline{1-3}
{[q9-5]}&Y    &\mcd&Set YY$^\omega$ to XY$^\omega$ (III==$\varepsilon$)\\   
\cline{2-2}
        &X,L  &\mcd &is merged into q1-9 \\
        &q1-10&\mcd&\\
\cline{1-2}
\et

\section{Varying $|E|$ down to $|E'|=2$}

We now have a $|Q\times E| = 44\cdot 8$ implementation (as well as a $54\cdot 7$ one).

Next, to bring the symbol count down to $|E'| = 2$ or 3, we may map up to 8 symbols to triples of bits and up to 9 symbols to pairs of ternary ``trits''.

The general case has the following relevant parameters:
The new alpha\-bet size $b=|E'|$, usually 2 or 3,
the length or the size of the $l$-tuple of $b$-ary digits simulating one original symbol from $E$,
$l = \lceil\log_b|E|\rceil$, and
$k$ is from our problem $n(k)$, usually $k=3$ or 4.

Every state is simulated in up to 4 sweeps through the $l$-tuple:

Sweep-0: If a state is entered alternately from both sides, Sweep-0 uses $l-1$ substates to bring the head from the ``other'' end to the normal one.
If a state is always entered from the same side, Sweep-0 is skipped.

Sweep-1 moves through the $l$-tuple to obtain the current symbol $e$, with $1,b,b^2,\dots,b^{l-1}$ states in the successive positions.

For ``Scan states'' where we search for 1 symbol and all others are combined in the wildcard case '*', the count is upperbounded by $2l-1$ substates, since in each  position (after the first) we only have to  distinguish ``scan symbol still possible'' vs.~``is some other symbol''.

Sweep-2 moves back to replace $e$ by $E^+(q,e)\neq e$ and or just to reach the other end to leave the $l$-tuple.
It uses $l-1$ substates per case.

Sweep-3 moves forth again, in $l-1$ substates per case, if we had to replace $e$ by $E^+(q,e)$, but leave opposite to the entry side.

Our $35+3k$ states have between $v=2$ and $v=6$ different cases.
$\#_{Sweep-0}=3+\Delta$  of them (q1-9,q3-3,q3-5), q1-9 is duplicated for $\Delta=1$, are entered from both sides.

$\#_{Scan>} =6+\Delta\cdot 3$ states (q1-2,q4-1,q4-3,q5-0,q7-3,q8-0) are scan states that are left opposite of the entry side and do not change the symbol.
Hence only Sweep-1 with $(1,2,2,2,\dots)$, {\it i.e.} $2l-1$,  substates is needed.

\begin{figure}
\centering
  \bt{l|c|cccc}
  state&cases&${>}$&${<}$&${<}$&${>}$\\
  &  &${=}$&${=}$&${\neq}$&${\neq}$\\
  \cline{1-6}
Scan$>$&$6\cdot 2$&12&-&-&-\\  
Scan$<$&(6+k) $\cdot$ 2&6+k&6+k&-&-\\  
q1-1&4&2&-&-&2\\
q1-4&3+k&2&-&-&1+k\\
q1-Ch&2k&k&-&k&-\\
q1-9&2&-&1&1&-\\
q1-10&2+k&2&-&-&k\\
q2-2&3&1&1&1&-\\
q2-3&3&1&1&-&1\\
q2-4&3&1&1&-&1\\
q3-1&3&1&1&-&1\\
q3-5&3&-&3&-&-\\
q3-2&2&1&-&-&1\\
q3-3&3&2&-&1&-\\
q3-4&3&2&1&-&-\\
\cline{1-6}  
\et
\bt{|l|c|cccc}
  state&cases&${>}$&${<}$&${<}$&${>}$\\
  &  &${=}$&${=}$&${\neq}$&${\neq}$\\
\cline{1-6}
q4-2&4&1&1&2&-\\
q4-4&2&-&-&1&1\\
q5-1&k+3&1&1&-&1+k\\
q5-Vh&3k&2k&k&-&-\\
q5-Kh&4k&k&k&-&2k\\
q6-2&3&1&1&1&-\\
q6-3&2&1&-&-&1\\
q7-1&1+k&1&-&-&k\\
q7-4&2&-&1&-&1\\
q8-1&5&1&1&-&3\\
q8-2&3&1&1&1&-\\
q9-1&2&-&-&1&1\\
q9-4&2&-&-&-&2\\
\\
\\
\cline{1-6}
  \et
  
  \bt{cccccccl}
  \multicolumn{6}{l}{ }\vspace*{-3mm}\\
  &states&cases&$>=$&$<=$&$<\neq$&$>\neq$\\
  $\sum $&35+3k&63+13k&22+4k & 15+2k & 9+k & 17+6k&$\Delta = 0$\\
  $\sum $& 7+ k& 14+2k & 8+k &     3 & 1+k &  2   &$\Delta = 1$: Extra cases\\
  \et
  \caption{Details of states \label{fig:substates}}
\end {figure}

Another $\#_{Scan<} =6+k+\Delta(2+k)$ states (q1-Ch,q1-6,q1-7,q1-11,q2-1,q5-2,q6-1) are scan states left at the entry end in case of a match, with or without changing $e$ to $E^+$.
Here we have Sweep-1 as before, and $l-1$ substates for Sweep-2 of the match case.

Then there are $\#_{Scan\neq}=5+\Delta$ states (q4-4,q6-3,q7-4,q9-1,q9-4) that scan in Sweep-1, but afterwards change $e$ to $E^+(q,e)\neq e$.
Here we have $2l-1$ substates for Sweep-1, but Sweep-2 and -3 are counted as in the general case.

The further $\#_{Sweep1} = 18+2k+\Delta\cdot 1$ states require $1+b+\dots+b^{l-1}=(b^l-1)/(b-1)$ substates for Sweep-1.
They and the  $\#_{Scan\neq}$ states are given in Table~\ref{fig:substates}.

\begin{figure}
\centering
  \bt{cccc|cccc}
  \multicolumn{4}{l|}{$|E|=...$}&
  \multicolumn{3}{c}{Meaning in segment}\\
  8+  & 2 & 3& 4& I&II&III\\
\cline{1-7}  
  Y  &111&22&33&marker&marker&marker& also Blank\\
  X  &011&21&22& --&marker&marker\\
  1  &100&10&10&$i=1,l=-$&1&1\\
  2  &101&11&11&$i=1,l=0$&2&2\\
  3  &110&12&12&$i=1,l=1$&3&3\\
  -- &000&00&00&$i=0,l=-$&1'&nil\\
  \$ &001&01&01&$i=0,l=0$&2'&match\\
  +  &010&02&02&$i=0,l=1$&3'&end of word\\
\cline{1-7}  
  4  &--&--&13&--&4&4\\
  4' &--&--&03&--&4'&---\\
\et
\caption{Synopsis of symbols over various alphabets \label{fig:Symb}}
\end {figure}

The total number of substates according to this upper bound is then
$\ds {\tt [Sweep-0]}\quad (l-1)\cdot\#_{Sweep-0}+$\\
$\ds{\tt [Sweep-1]}\quad(2l-1)\cdot (\#_{Scan>}+\#_{Scan\neq}) + (3l-2)\cdot \#_{Scan<}$\\
$\ds\hspace*{2 cm} +(1+b+b^2+\dots +b^{l-1})\cdot\#_{Sweep-1}+$\\
$\ds{\tt [Sweep-2/3]}\quad(l-1)\cdot (0\cdot v_{>=}+ 1\cdot(v_{<=}+v_{<\neq})+2\cdot v_{>\neq})$

Note the dual use of the factor $k$ in Figure~\ref{fig:substates} :
In q1-4,q1-10,15-1, we have to distinguish $k$ symbols in $k$ different cases.
In q1-Ch,q5-Vh,q5-Kh, there are $k$ different states, each one with a constant amount of cases.

\begin{figure}[b]
\centering
\bt{lll}
Sweep&$\Delta=0, |E|=2k+2$&$\Delta=1, |E|=2k+1$\\
\cline{1-3}
Sweep-0&$3\cdot (l-1)$&$4\cdot (l-1)$\\
Sweep-1&$(17+k)\cdot(2l-1)$&$(23+2k)\cdot(2l-1)$\\
       &$+(18+2k)\cdot (1+b+\dots+b^{l-1})$&$+(19+2k)\cdot (1+b+\dots+b^{l-1})$\\
Sweep-2/3&$(6+k+58+15k)\cdot (l-1)$&$(8+2k+66+16k)\cdot (l-1)$\\
\et
\caption{Substate counts per sweep \label{fig:SweepC}}
\end{figure}

For $b=2,l=3$ we have\\
$\ds2\cdot 3+5\cdot(6+6)+7\cdot(6+k)
+(1+2+4)\cdot(17+2k)
+2\cdot(2k+15+k+9+2\cdot(6k+17))$
$=343+51k$ substates.
For $k=3$ that is 496 substates, to be compared with the actual 276 states for the ``hand-wired'' version.

For $b=3,l=2$ we have\\
$\ds 1\cdot 3+3\cdot(6+6)+4\cdot(6+k)
+(1+3+9)\cdot(17+2k)
+1\cdot(2k+15+k+9+2\cdot(6k+17))$\\
$=351+45k$ substates, which is 486 for $k=3$,  while the ``hand-wired'' version for pairs of trits $(l=2, b=3)$ needs only 155 states.

For $b=2$ and $l=4$ we have\\
$\ds 3\cdot 3+7\cdot(6+6)+10\cdot(6+k)
+(1+2+4+8)\cdot(17+2k)
+3\cdot(2k+15+k+9+2\cdot(6k+17))$
$=582+85k$.
With $k=4$, we get a $(922,2)$ implementation for $n(4)$.

\begin{theorem}\label{thm:Eb}
  $(i)$ Given any TM with $|Q|=n, |E|=m$, we obtain another TM with $|E'|=b$, $l:=\lceil \log_b|E|\rceil$  and a state count of at most  $|Q'| \leq $
  $$n\cdot [(l-1)+(1+b+b^2+\dots+b^{l-1})+|E|\cdot 2(l-1)] < 2(m+1)n\lceil\log_b(m)\rceil.$$

  $(ii)$ With $n,m,b,l$ as before and the numbers $\#_{Sweep-0},\#_{Scan},\#_{>=},\#_{>\neq},$ $\#_{<=},\#_{<\neq}$ given as in the text, we obtain the sharper upper bound\\
  $\ds |Q'|\leq\#_{Sweep-0}\cdot (l-1)+\#_{scan}\cdot(2l-1) + (n-\#_{scan})(1+b+\dots +b^{l-1})$\\
  $\ds+(\#_{<=}+\#_{<\neq}+2\#_{>\neq})\cdot(l-1)$\\
  $\ds< n\left(\frac{b^l-1}{b-1}+l-1\right)+\#_{Sweep-0}\cdot (l-1)-\#_{Scan}
\left(\frac{b^l-1}{b-1}-2l+1\right) $.
\end{theorem}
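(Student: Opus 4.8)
The plan is to establish $(i)$ as a crude corollary of $(ii)$, so I would prove $(ii)$ first and then specialize. The heart of the argument is the simulation construction already sketched in Section~3: every state $q$ of the original machine is replaced by a bounded gadget of substates that walk across the $l$-tuple of $b$-ary digits encoding one original symbol, read it off, and then implement $Q^+(q,\cdot)$, $E^+(q,\cdot)$, $D^+(q,\cdot)$. So the first step is to make the correctness of that simulation precise: I would state (or cite from the preceding text) that the new TM, on the image of any configuration under the digit-encoding, faithfully reproduces one original step, and that halting is preserved. This part is ``routine bookkeeping'' and I would not grind through it; the real content of the theorem is the \emph{count} of substates, so the bulk of the proof is arithmetic accounting.

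For the counting, I would organize the substates exactly along the four-sweep decomposition of Figure~\ref{fig:SweepC}. Sweep-0 costs $l-1$ substates, but only for the $\#_{Sweep-0}$ states that are entered from both ends; all others skip it, contributing $0$. Sweep-1 is where the read happens: a generic state needs $1+b+b^2+\dots+b^{l-1}=(b^l-1)/(b-1)$ substates to track the prefix of digits read so far, but a ``Scan'' state — one that searches for a single distinguished symbol and lumps everything else into the wildcard — only needs to remember ``scan-target still possible'' versus ``already ruled out'' after the first digit, hence $1+2+2+\dots+2 = 2l-1$ substates. Sweeps 2 and 3 handle the write-back and the possible re-traversal to exit on the side opposite the entry; here I would tabulate, over the $v$ cases of each state, how many need $0$, $1$, or $2$ extra traversals of the $l$-tuple according to whether $E^+=e$ or $E^+\neq e$ and whether the exit side equals the entry side — these are the four columns $>{=}$, $<{=}$, $<{\neq}$, $>{\neq}$ of Figure~\ref{fig:substates}, giving the $(l-1)(\#_{<=}+\#_{<\neq}+2\#_{>\neq})$ term. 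Summing the four sweep contributions gives the first (exact) bound in $(ii)$.

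To get the second, simplified inequality in $(ii)$, I would coarsen: replace $n-\#_{Scan}$ states' Sweep-1 cost by the full $(b^l-1)/(b-1)$, then observe that each of the $\#_{Scan}$ states \emph{saved} $(b^l-1)/(b-1)-(2l-1)$ relative to that uniform bound, and that the Sweep-2/3 term for a scan-or-any state is at most $(l-1)$ per case — folding the case counts into the constant $n(l-1)$ plus the explicit correction terms yields the displayed estimate. For part $(i)$, I would drop the refined $\#_{\bullet}$ bookkeeping entirely: bound Sweep-0 by $n(l-1)$, Sweep-1 by $n(b^l-1)/(b-1)$, and Sweeps 2--3 crudely by $2(l-1)$ per \emph{symbol} per state, i.e.\ $2n|E|(l-1)$; then the final strict inequality $< 2(m+1)n\lceil\log_b m\rceil$ follows from $(b^l-1)/(b-1)\le b^l = b^{\lceil\log_b m\rceil}$ together with $b^{l-1}<m$ (so $b^l \le bm$, but more cleanly $(b^l-1)/(b-1)+(l-1)+2m(l-1) \le 2(m+1)l$ after a short estimate using $l\ge 1$ and $b\ge 2$), which I would verify in a line or two.

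The step I expect to be the main obstacle is \emph{not} the construction or the big-picture count but the precise matching of the refined per-state, per-case tallies in Figure~\ref{fig:substates} to the summary row — in particular correctly attributing the ``dual use of $k$'' (states like q1-4 with $k$ symbols handled in $k$ cases, versus families like q1-Ch with $k$ distinct states each of constant case-count) and correctly incorporating the $\Delta$ adjustment for the $|E|=2k+1$ variant where the marker $X$ is dropped and ten states are duplicated. Getting the constants $63+13k$, $22+4k$, etc., to fall out exactly requires that the classification of all $35+3k$ states into $\#_{Sweep-0}$, $\#_{Scan>}$, $\#_{Scan<}$, $\#_{Scan\neq}$, $\#_{Sweep1}$ be exhaustive and disjoint; I would verify this partition explicitly before summing, since an off-by-one in the classification silently corrupts the final bound. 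Once the partition is pinned down, the remaining algebra to reach both displayed inequalities is mechanical.
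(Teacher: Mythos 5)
Your proposal takes essentially the same route as the paper: the sweep-by-sweep accounting (Sweep-0 only for two-sided-entry states, Sweep-1 with the $2l-1$ scan shortcut versus the full $1+b+\dots+b^{l-1}$, and Sweep-2/3 weighted $0/1/1/2$ by the four exit/rewrite cases) is exactly the counting developed in the text preceding the theorem, and the paper's proof itself consists only of the auxiliary estimates $l-1<\log_b(m)$ and $(b^l-1)/(b-1)<2m$ for $(i)$ and the comparison of the weighted case counts with $n$ for $(ii)$ — which is precisely your coarsening step, so there is no substantive difference in approach.
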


\begin{proof}
  $(i)$ We use $l-1<\log_b(m), 1+b+\dots+b^{l-1}=\frac{b^l-1}{b-1} < 2m$.

  $(ii)$ We use $(\#_{<=}+\#_{<\neq}+{\bf 1}\cdot\#_{>\neq})\leq n$.
\end{proof}

The bounds in Theorem~\ref{thm:Eb} are independent of the mapping $E\to E'$, which may improve the numbers.

\section{Varying $|Q|$ down to $|Q'|= 2$}

\subsection{$|Q'|=3$}

Reduction in states down to 3 is achieved via simulation.

We use only three states, qX (expansion), qL (go left), and qR (go right), so $|Q'|=3$.
The new symbol set is $E' = Q\times \{X,L,R\}\times E\ni (q,d,e)$,
where we assemble the new state by transferring it bit-by-bit via qL or qR, successively yielding the $q$ part of $e'=(q,e,d)$.

The generalized direction $d$ tells, whether the state still has to be expand\-ed, $d=X$, or the direction is  $d\in\{L,R\}$ as in the original TM.
The $e$ part is the original symbol.

The part $e$ of the current symbol $e$ is replaced by $e' = Q^+(q,e)$ at expansion.
The direction is $d=$ X during the assembly of $q$ and then $d = D^+(q,e)$.

\begin{figure}[t]
\centering
  $\ba{cccccccl}
   &q&e& & Q^+ & E^+ & D^+\\
\cline{1-7}
  (1)&qX& [q,d,e] &\to&  qd& [q-1,d,e]&d&q\in\nn\\
  (2)&qL& [q,X,e] &\to&  qX& [q+1,X,e]&R\\
  (3)&qR& [q,X,e] &\to&  qX& [q+1,X,e]&L\\
  (4)&qX& [0,d,e] &\to&  qX& [0  ,X,e]&d&d\in{L,R}\\
  (5)&qX& [q,X,e] &\to&  q\tilde d& [\tilde q-1,\tilde d,\tilde e]&\tilde d\\
  \ea$
\caption{Transitions for $|Q'|=3$ \label{fig:Trans3}}.
\end {figure}

We first use transition (1) from Figure~\ref{fig:Trans3}
with $d\in \{L,R\}$ that is we move to the left with $qL$ for $d=L$ or to the right with $qR$ for $d=R$, respectively.
Then we return from the neighbour cell, whose $q$ is incremented,  using transition (2) or (3).
We repeat until $q=0$ with $qX$, and then release, $d:=X$, the current cell via (4), thus finishing this current cell.

We move over to the neighbor as new current cell by (4), staying in state $qX$.
Here, we expand the symbol $(q,e,X)$ by transition (5):
We first calculate the new values $\tilde q := Q^+(q,e), \tilde e := E^+(q,e)$, and $\tilde d := D^+(q,e)$ with the functions of the original ma\-chine.
The intermediate result would be $(qX, [\tilde q,\tilde d,\tilde e],\tilde d)$. However, we immediately include the first move, decre\-ment\-ing $\tilde q$, and obtain the overall value, starting the bitwise transfer of $\tilde q\in\nn$ in the new direction $\tilde q$.

Two remarks:
The tape's blank symbol is $(0,X,-)$ where $-$ is the original blank from $E$, and upon halting the original machine, $Q^+$=HALT, also the simulator actually halts.

\begin{theorem}\label{thm:Q3}
  Given any TM with $|Q|=n, |E|=m$, we obtain another TM with $|Q'| = 3$ and at most
  $$|E'|\leq 3(n+1)\cdot m$$
  states.
\end{theorem}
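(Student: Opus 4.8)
The plan is to make the construction from Figure~\ref{fig:Trans3} explicit and then simply count the resulting symbols. First I would fix the original machine $M=(Q,E,Q^+,E^+,D^+)$ with $|Q|=n$, $|E|=m$, and WLOG identify $Q=\{1,2,\dots,n\}$ (the state numbered, say, $n$ being HALT, or HALT a formal extra value). The new machine $M'$ has exactly the three states $q_X,q_L,q_R$, so $|Q'|=3$ is immediate; the whole content of the theorem is the bound on $|E'|$. The new tape alphabet is, by design, a subset of $Q\times\{X,L,R\}\times E$, so the crude bound is $|E'|\le |Q|\cdot 3\cdot |E| = 3nm$. To get the stated $3(n+1)m$ we need to see which triples actually occur, and note that the ``counter'' component in transitions (1)--(3) ranges over $\{0,1,\dots,n\}$ rather than $\{1,\dots,n\}$: during the bitwise (here: unary) transfer of a state value $q$, the cell holds $(q,d,e)$ with $q$ counting down to $0$, so the first component takes $n+1$ possible values $\{0,1,\dots,n\}$, the second takes the $3$ values $\{X,L,R\}$, and the third takes the $m$ values of $E$. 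Hence $|E'|\le 3(n+1)m$.

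The key steps, in order, are: (1) describe $E'\subseteq\{0,\dots,n\}\times\{X,L,R\}\times E$ and observe $|E'|\le 3(n+1)m$; (2) verify that $M'$ is well-defined, i.e.\ that every symbol/state pair $(q_\ast,(c,d,e))$ that can be reached from the all-blank tape is covered by exactly one of the five transition schemata (1)--(5): schema (1) for $q_X$ on $(c,d,e)$ with $c\ge 1$ and $d\in\{L,R\}$, schema (4) for $q_X$ on $(0,d,e)$ with $d\in\{L,R\}$, schema (5) for $q_X$ on $(c,X,e)$, and schemata (2)/(3) for $q_L$/$q_R$ on $(c,X,e)$; (3) check that $M'$ faithfully simulates $M$, in the sense that whenever the head of $M'$ is on a cell in state $q_X$ with that cell holding $(c,X,e)$ and all other cells holding their ``resolved'' form $(0,X,\cdot)$, the configuration corresponds to $M$ being in state $c$ scanning $e$; one macro-step of $M$ is emulated by schema (5) followed by a short burst of (1)/(2)/(3) moves that unary-transfers the new state into the neighbouring cell and then (4) to re-enter $q_X$ on that neighbour; (4) handle the base case — the blank symbol of $M'$ is $(0,X,-)$ with $-$ the blank of $M$, and the start configuration of $M'$ (state $q_X$ on a cell that we pre-load with $(1,X,-)$, or equivalently treat the initial blank specially) corresponds to $M$ in its start state on empty tape; (5) confirm that HALT is preserved: when $Q^+(c,e)=\mathrm{HALT}$, schema (5) produces $q_{\mathrm{HALT}}$ which is the halting state of $M'$, so $M'$ halts iff $M$ halts.

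The only genuinely delicate point is step (3) together with the bookkeeping of step (1): one must be careful that the intermediate values appearing in the first component really stay within $\{0,1,\dots,n\}$ and never exceed $n$. In schema (5) the new state $\tilde q=Q^+(q,e)\in\{1,\dots,n\}$ is immediately decremented (``we immediately include the first move'') so the cell is written with first component $\tilde q-1\in\{0,\dots,n-1\}$; schemata (2)/(3) increment a component that was at most $\tilde q-1\le n-1$, hence stay $\le n$; and schema (1) decrements, staying $\ge 0$. So the range $\{0,\dots,n\}$ is exactly right, giving the $(n+1)$ factor rather than $n$ — this is where the ``$+1$'' in $3(n+1)m$ comes from, and it is the one place where a sloppy count would give the wrong (too small) bound while a very sloppy count would give $3nm$ by forgetting the $0$ value. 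I expect the main obstacle in writing this up cleanly is not any single hard argument but rather stating the simulation invariant precisely enough that the case analysis over the five schemata is visibly exhaustive and the first-component range is manifestly $\{0,\dots,n\}$; once that invariant is pinned down, the theorem follows by inspection, and the bound $|E'|\le 3(n+1)m$ is just the product of the three coordinate ranges.

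\begin{proof}
We keep the three states $q_X,q_L,q_R$, so $|Q'|=3$. Identify $Q=\{1,\dots,n\}$ (with $n$, say, the state whose $Q^+$-entries are HALT). Every symbol of $E'$ produced by the transitions of Figure~\ref{fig:Trans3} is a triple $(c,d,e)$ whose first component is a value of the running counter, which by schemata (1)--(5) ranges over $\{0,1,\dots,n\}$ (schema (5) writes $\tilde q-1\le n-1$, schemata (2)/(3) increment it but never past $n$, schema (1) decrements it but never below $0$); whose second component lies in $\{X,L,R\}$; and whose third component is an element of the original alphabet $E$. Hence
$$|E'|\le (n+1)\cdot 3\cdot m = 3(n+1)m.$$
Finally, $M'$ started on the all-$(0,X,-)$ tape in state $q_X$ simulates $M$ on the empty tape step by step: a configuration with $q_X$ on a cell holding $(c,X,e)$ and every other cell holding a resolved symbol $(0,X,\cdot)$ encodes $M$ in state $c$ scanning $e$; schema (5) computes $\tilde q=Q^+(c,e),\tilde e=E^+(c,e),\tilde d=D^+(c,e)$, and the ensuing burst of schemata (1),(2)/(3),(4) transfers $\tilde q$ unarily into the $\tilde d$-neighbour and re-enters $q_X$ there, reproducing one step of $M$. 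When $Q^+(c,e)=\mathrm{HALT}$ the simulator halts as well. Thus $M'$ is a Turing machine with $|Q'|=3$ and $|E'|\le 3(n+1)m$ simulating $M$.
\end{proof}
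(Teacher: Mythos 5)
Your proposal is correct and follows essentially the same route as the paper: the paper's proof simply takes $Q'=\{qX,qL,qR\}$ and $E'=(Q\cup\{0\})\times\{L,R,X\}\times E$ from Figure~\ref{fig:Trans3}, which is exactly your set $\{0,1,\dots,n\}\times\{X,L,R\}\times E$, giving $|E'|\leq 3(n+1)m$. Your additional bookkeeping (the counter range $\{0,\dots,n\}$, the simulation invariant, and the start/HALT cases) just spells out what the paper leaves implicit.
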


\begin{proof}
  From Figure~\ref{fig:Trans3}, we have a possible
  $Q':= \{qX,qL,qR\}$ and\\
  $E' := (Q\cup\{0\})\times \{L,R,X\}\times E$ with the given sizes.
\end{proof}

\subsection{$|Q'|=2b+1, b\geq 2$}

With $|Q'|=3$, we used a one-letter  alphabet for the ``information transfer'' that is $\log(1)=0$ bits of information to be transferred in each move.

\begin{figure}
\centering
  $\ba{ccc|cccl}
  &q&e&Q^+&E^+&D^+&\\
\cline{1-6}
  (1)&qD_{\overline{q_a}}&[q_{a-1}\dots q_1q_0,-,e]&qX&[q_aq_{a-1}\dots q_1q_0,-,e]&\overline D\\
(2)&qX&[q_{n-1}q_{n-2}\dots q_a,D,e]&qD_{q_a}&[q_{n-1}q_{n-2}\dots q_{a+1},D,e]&D\\
(3)&qX&[q_{n-1}q_{n-2}\dots q_1q_1,-,e]&qD'_{q'_{n-1}}&[q'_{n-2}\dots q'_{1}q'_0,D',e']&D'\\
\multicolumn{6}{l}{\mbox{\rm with QED}^+(q_n-1\dots q_0,e) = (q'_{n-1}\dots q'_0,e',D')\mbox{\rm\ in the original TM}}\\
  \ea$
\caption{Transitions for $|Q|=2b+1$ \label{fig:Transb}}
\end {figure}

As Chaitin \cite{Chaitin} points out, the information about the {\it length} of the trans\-mission, or the {\it end} of the transfer is a necessary and important piece of information.
Here it was the {\it only} in\-for\-ma\-tion.

We can, however, use larger alphabets with $b$ letters and 
$Q' =\{qX,qL_0,$ $qL_1,\dots,qL_{b-1},qR_0,\dots qR_{b-1}\}$,  thus $|Q'| = 2b+1$.


Our symbol set then is:

\begin{definition} Symbol set $E'$  for $|Q'|=2b+1$\label{defSySet}
  
Let $E' :=$
$\ds \{-, (q'_{l-1}q_{l-2}\dots q_1)_b, \dots(q'_{l-1}q_{l-2})_b, (q'_{l-1})_b\}  \}\times E\times \{L,R\}$\hspace*{1 cm}\\
$\ds\dot{\cup}\ \ 
(\{-\}\dot{\cup} \{(q'_{l-1}q_{l-2}\dots q_1q_0)_b\}
\dot{\cup}[b]^{l-1}\dot{\cup}[b]^{l-2}\dot{\cup}\dots \dot{\cup}[b])\times \{-\}\times E$\\
where $l := \lceil\log_b|Q|\rceil$, $(q'_{l-1}q_{l-2}\dots q_0)_b\leq |Q|$ and $[b] := \{0,1,\dots,b-1\}$.

The  numbers in the first part describe $|Q|/b, |Q|/b^2,\dots |Q|/b^{l-1}$ prefixes, the numbers in the second part $|Q|, b^{l-1},\dots,b$ suffixes of the elements from $Q\equiv \{1,\dots,|Q|\}$.

We have $E'\leq (1+\lceil n/b\rceil +\lceil n/b^2\rceil +\dots+\lceil n/b^{l-1}\rceil)\cdot 2m+nm+\frac{b^l-1}{b-1}\cdot m $\\
$<  \left[n\cdot \frac{b+1}{b-1}+2(l-1)+ \frac{b^l-1}{b-1}  \right]\cdot m$, using
    $1+\frac{2}{b-1}\cdot\frac{b^{l-1}-1}{b^{l-1}} < \frac{b+1}{b-1}$.
\end{definition}

The state $X$ corresponds to being in the current tape cell, the states $L_i, R_i, 0\leq i\leq b-1$ are used in the adjacent cell to the left or right, respectively.

Let the current state and symbol be
$qX$ and $[(q_fq_{f-1}\dots q_1q_0), L, e], f\geq 1$.
Then $D^+=L$ from the last component of the symbol.
We cut off one $b$-ary digit, $q_f$, which goes into the nextstate $Q^+=qL_{q_f}$, and obtain $E^+=(q_{f-1}\dots q_1q_0, e, L)$, transition (2) in Figure~\ref{fig:Transb}.

Let now the current state be $qL_i$ after moving to the left.
Let the symbol there be $[(q_{l-1}q_{l-2}\dots q_{f+1}), -,e]$.
Then $Q^+=qX$, $D^+=R$, the opposite direction from $q=L_i$.
Also, let $E^+=[(q_{l-1}q_{l-2}\dots q_{f+1}i),-,e]$, appending the $b$-ary digit transferred as index $i$ to the {\it right} of the current first component, there becoming $q_f$ as in transition (1) of Figure~\ref{fig:Transb}.

{\bf Example} (see Figure~\ref{fig:Ex3})

\nopagebreak

We run two transitions
$QED^+(7,e_1) = (15,e_2,L)$ and 
$QED^+(15,e_3) = (4,e_5,R)$ of the original machine, where $7,15,4\in Q$ and $e_1,e_2,e_3,e_5\in E$.

\begin{figure}[t]
\centering
  \bt{cclcclccl}
  &Pos&q&e&$\to$&$Q^+$&$E^+$&$D^+$&\\
\cline{2-8}
  &102:&qX&[021,$-$,$e_1$]&$\to$&qL$_1$&[20,L,$e_2$]&L&with $(120)_3=15=Q^+$\\
  &101:&qL$_1$&[--,$-$,$e_3$]&$\to$&qX&[1,$-$,$e_3$]&R\\
  &102:&qX&[20,L,$e_2$]&$\to$&qL$_2$&[0,L,$e_2$]&L&\\
  &101:&qL$_2$&[1,$-$,$e_3$]&$\to$&qX&[12,$-$,$e_3$]&R\\
  &102:&qX&[0,L,$e_2$]&$\to$&qL$_0$&[--,L,$e_2$]&L&\\
  &101:&qL$_0$&[12,$-$,$e_3$]&$\to$&qX&[120,$-$,$e_3$]&R&$q=15=(120)_3$ has\\
  &&&&&&&&arrived completely\\
  &102:&qX&[--,L,$e_2$]&$\to$&qX&[--,$-$,$e_2$]&L&Clean-up, note $Q^+=X$\\
(*) &101:&qX&[120,$-$,$e_3$]&$\to$&qR$_1$&[01,R,$e_5$]&R&QED$^+(15,e_3)$\\
  &102:&qR$_1$&[--,$-$,$e_2$]&$\to$&qX&[1,$-$,$e_2$]&L&\\
  &101:&qX&[01,R,$e_5$]&$\to$&qR$_1$&[0,R,$e_5$]&R&\\
  &102:&qR$_1$&[1,$e_2$,--]&$\to$&qX&[11,$-$,$e_2$]&L&\\
  &101:&qX&[0,R,$e_5$]&$\to$&qR$_0$&[--,R,$e_5$]&R&\\
  &102:&qR$_0$&[11,$-$,$e_2$]&$\to$&qX&[011,$-$,$e_2$]&L&\\
  &101:&qX&[--,R,$e_5$]&$\to$&qX&[--,$-$,$e_5$]&R&\\
(**) &102:&qX&[011,$-$,$e_2$]&$\to$&qD$_i$&[q',D,$e_x$]&D&QED$^+(4,e_2)$, D $\in\{L,R\}$\\
    \et
\caption{Example for $b=3$, $|Q|=7$ \label{fig:Ex3}}
\end {figure}

We assume to be {\it e.g.} in position 102, with position 101 holding some symbol $e_3$ and position 103 holding $e_4$, hence the simulator has

101: $(-,-,e_3)$,\ \
102: $(021,-,e_1)$,\ \ 
103: $(-,-,e_4)$
on its tape where $(021)_3=7$ is the current state.

In $(*)$ of Figure~\ref{fig:Ex3} we use transition (3) of  Figure~\ref{fig:Transb} and expand according to the original transition $(15,e_3)\to (4,e_5,R)$.

The new state $(011)_3$ is divided into (01) in the symbol and the trailing 1 as index to $q=R_1$.

The same happens in $(**)$, where now QED$^+(4,e_2) = (q'|i,e_x,d)$ defines the new parts $(3\cdot q'+i)\in Q, e_x\in E, d\in\{L,R\}$.
{\it E.g.} for QED$^+(4,e_2) = (8,e_6,L)$ with $8=(022)_3$, we have $q'=02, i=2, e_x=e_6, D=L$ and thus $(L_2,[02,L,e_6],L)$ as r.h.s.


\begin{theorem}\label{thm:Q2b}
  Given any TM with $|Q|=n, |E|=m$, there is another TM with $|Q'| = 2b+1$ states and at most
  $$|E'|\leq
\left(n\cdot \frac{b+1}{b-1}+2(l-1)+ \frac{b^l-1}{b-1}  \right)\cdot m$$
  symbols, where $l := \lceil\log_b(n)\rceil$.
\end{theorem}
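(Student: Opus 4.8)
The plan is to establish Theorem~\ref{thm:Q2b} as an immediate corollary of the construction already laid out in Figure~\ref{fig:Transb} together with the size bound computed inside Definition~\ref{defSySet}. First I would record that the construction of Section~4.2 produces a machine with state set $Q' = \{qX, qL_0,\dots, qL_{b-1}, qR_0,\dots,qR_{b-1}\}$, so that $|Q'| = 1 + b + b = 2b+1$; this is literally the displayed state set in the text. Next I would verify correctness of the simulation, that is, that transitions (1), (2), (3) of Figure~\ref{fig:Transb} correctly simulate one step of the original TM: state $qX$ at the current cell reads a symbol of the form $[(q_{l-1}\cdots q_0),-,e]$, applies the original $QED^+$ to get $(q'_{l-1}\cdots q'_0, e', D')$ via transition (3), peels off the low digit $q'_{l-1}$... — more carefully, the digits are shuttled one $b$-ary digit at a time between the current cell and the neighbour in direction $D'$ using the $qD_i$ states and transitions (1)–(2), exactly as traced in the worked Example of Figure~\ref{fig:Ex3}; when the digit string is exhausted the neighbour cell holds the full new state and the head is positioned on it in state $qX$, ready for the next simulated step. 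I would assert this correctness holds "by inspection of Figure~\ref{fig:Transb} and the Example," since the paper clearly intends the construction, not a fresh correctness proof, to be the content.

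The remaining and only quantitative point is the symbol-count bound. Here I would simply invoke the chain of (in)equalities already stated in Definition~\ref{defSySet}: the alphabet $E'$ decomposes as a disjoint union of two parts, the "prefix" part of size $(1 + \lceil n/b\rceil + \lceil n/b^2\rceil + \cdots + \lceil n/b^{l-1}\rceil)\cdot 2m$ coming from the $\{L,R\}\times E$ factor, and the "suffix" part of size $nm + \frac{b^l-1}{b-1}\cdot m$ coming from the $\{-\}\times E$ factor (the $nm$ from the full state range $\{1,\dots,n\}$ and the geometric sum from the digit-tuples $[b]^{l-1}\dot\cup\cdots\dot\cup[b]$). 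Summing and using $\lceil n/b^j\rceil < n/b^j + 1$ together with the stated estimate $1 + \frac{2}{b-1}\cdot\frac{b^{l-1}-1}{b^{l-1}} < \frac{b+1}{b-1}$ gives precisely
$$|E'| < \left(n\cdot\frac{b+1}{b-1} + 2(l-1) + \frac{b^l-1}{b-1}\right)\cdot m,$$
which is the claimed bound with $l = \lceil\log_b(n)\rceil = \lceil\log_b|Q|\rceil$. So the proof body is essentially: "$|Q'| = 2b+1$ by construction; correctness of the simulation is as in Figure~\ref{fig:Transb} and the Example; the size estimate is the computation in Definition~\ref{defSySet}."

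I do not expect a genuine obstacle, since all the work has been front-loaded into the construction and into Definition~\ref{defSySet}; the theorem is a packaging statement. The one place where a careful writer should pause is making sure the counting in Definition~\ref{defSySet} is not double-counting — in particular that the element $-$ (the blank) appears consistently and that the "$+1$" terms and the $\frac{b^l-1}{b-1}$ geometric sum are not overlapping with the $n$-sized block — but since we are allowed to cite Definition~\ref{defSySet} as given, I would treat its displayed bound as established and merely note $l=\lceil\log_b(n)\rceil$ to match the theorem's hypothesis. A secondary minor point worth a sentence is that the simulator halts exactly when the original does: when $Q^+ = \mathrm{HALT}$ in transition (3) there is no digit-transfer phase, so the simulator halts in the current cell, mirroring the remark made for the $|Q'|=3$ case. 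With those observations the proof is complete.

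\begin{proof}
By the construction of Section~4.2, the simulating machine uses exactly the states $Q' = \{qX\}\cup\{qL_i : 0\le i\le b-1\}\cup\{qR_i : 0\le i\le b-1\}$, hence $|Q'| = 2b+1$. Transitions (1)--(3) of Figure~\ref{fig:Transb} simulate one step of the original TM: starting in state $qX$ on a cell carrying $[(q'_{l-1}\cdots q_0),-,e]$ one applies the original $QED^+$ (transition (3)), then shuttles the $b$-ary digits of the new state one at a time into the neighbouring cell in the new direction via transitions (1)--(2), arriving in state $qX$ on that neighbour with its full new state assembled, as traced in the Example of Figure~\ref{fig:Ex3}; if $QED^+$ yields $\mathrm{HALT}$ the digit-transfer phase is empty and the simulator halts. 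The alphabet $E'$ is the one given in Definition~\ref{defSySet}, with $l := \lceil\log_b|Q|\rceil = \lceil\log_b(n)\rceil$, and the size estimate computed there yields
$$|E'| < \left(n\cdot\frac{b+1}{b-1} + 2(l-1) + \frac{b^l-1}{b-1}\right)\cdot m,$$
which is the asserted bound.
\end{proof}
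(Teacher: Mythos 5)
Your proposal matches the paper's own proof, which is exactly the one-liner you describe: take $Q'=\{qX,qL_0,\dots,qL_{b-1},qR_0,\dots,qR_{b-1}\}$, take $E'$ from Definition~\ref{defSySet}, and take the transitions from Figure~\ref{fig:Transb}, with the symbol-count estimate already carried out inside Definition~\ref{defSySet}. Your added remarks on simulation correctness and halting are consistent elaborations of the same construction, not a different route.
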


\begin{proof}
  Set $Q'=\{X,L_1,\dots, L_{b-1},R_1,\dots, R_{b-1}\}$, $E'$ as in Def.~\ref{defSySet}, and QED$^+$ as in Figure~\ref{fig:Transb}.
\end{proof}

\subsection{$|Q'|=2$, One Initial Non-Blank Symbol}

We use $Q'=\{L,R\}$,
$E':= \{0,1,2,\dots,|Q|\}\times \{-,L_{new},L_{old},$ $R_{new},R_{old}\}\times E$,
and the transitions from Figure~\ref{fig:Q2}

The meaning of state L (respectively R) here is {\it being in} the left (right) of the two active cells.
If we have a transfer to the right (for $D^+=R$) the $L$ state in $R_{old}$ decrements the $q$ part of its symbol (transition (3) with $X=L, \ox=R$), while the R state in $R_{new}$ increments it, a transition (2), with $X=R, \ox=L$.

We start in state $L$ on the non-blank symbol $(1,R_{new},e_0)$, the 1 denoting the start state, whereas the rest of the tape is blanked out with $(0,-,e_0)$ or any other initial contents $(0,-,e_n)$.
We immediately execute a QED$^+$, transition (5), according to $(q_1,e_0)$ of the original TM.

\begin{figure}
  \centering
  $\ba{ccccccl}
&q&e&\to&Q^+&E^+&D^+\\
\cline{1-7}
(1)&X,&[0,-,e]       &\to&\ox,&[1,X_{new},e],&\ox\\
(2)&X,&[n,X_{new},e]  &\to&\ox,&[n+1,X_{new},e],&\ox\\
(3)&X,&[n,\ox_{old},e]&\to&\ox,&[n-1,\ox_{old},e],&\ox\\
(4)&X,&[0,\ox_{old},e]&\to&X,&[0,-,e],&\ox\\
(5)&X,&[n,\ox_{new},e]&\to&X',&[n'-1,X'_{old},e],&X'\ (*)\\
\ea$

$(*)$ with $(n,e) \mapsto (n',e',X')$ in the original machine
  
\caption{Transitions for $X\in Q' = \{L,R\}$, $|Q'|=2$ }.\label{fig:Q2}
\end {figure}

\subsection{$|Q'|=2$, Starting on Empty Tape $^\omega 0^\omega$\\
  Introducing A Third Option via ``Overflow Error''}

Apparently, when seeing a blank symbol $[0,-,e_0]$ (and the {\tt BB} rules demand {\it only} blanks initially), we have to distinguish 3 situations:

\noindent-- we are to the left of the current position, activate this cell, increase the $q$ counter from the blank value 0 to 1 by a transition of type (1) in Figure~\ref{fig:Q2} with $X=L$ and return to the right\\
-- we are to the right of the current position and behave symmetrically\\
-- at start, we have to convert that blank into $(q_1,-,0)$ to start computation, since there is no state yet on the tape.

When working with only $|Q'|=2$ states and initially only 1 symbol, the blank, we, also apparently, can {\it not} distinguish 3 cases.
If, however, we are allowed a single non-blank tape cell, we can do just fine, see Figure~\ref{fig:Q2}.

As we have seen, three states or two states plus one non-blank are sufficient to get the simulation started.
Our task, however, is starting with a blank tape and $Q=\{L,R\}$.
What does really happen then:\\
\hspace*{-2mm}\bt{l}
$(L,[0,-,e_0]) \to (R,[1,L_{new},e_0],R)$\\
$(L,[1,L_{new},e_0]) \to (R,[2,L_{new},e_0],R)$\\ 
$(L,[2,L_{new},e_0]) \to (R,[3,L_{new},e_0],R)$ 
\et
\hspace*{-2mm}\bt{l}
$(R,[0,-,e_0]) \to (L,[1,R_{new},e_0],L)$\\
$(R,[1,R_{new},e_0]) \to (L,[2,R_{new},e_0],L)$\\
$(R,[2,R_{new},e_0]) \to (L,[3,R_{new},e_0],L)$
\et
\\... and so on, {\it ad infinitum}.

Actually, there is no symbol [$q$,...] in $E'$ for $q>|Q|$ and this yields the third option:

Let $X:= D^+(q_1,0)$ be the first move of the original TM.
Then we start in X and set the equivalences:
$X,[|Q|+1,X_{new},0] :\equiv X,[q_1,\ox_{new},0]$ (bootstrap start) and 
$\ox,[|Q|+1,\ox_{new},0] :\equiv \ox,[0,-,0]$ (blank).

That gives us a 2-state empty initial tape simulation for any other TM with empty initial tape.

{\bf Example} (see Figure~\ref{fig:Q2empty})

Let the original machine start in position 1 on empty tape and execute
$(q_1,0)\to (q_2,1,R)$\ \
$ (q_2,0)\to (q_3,2,L)$\ \
$ (q_3,1)\to (q_4,3,L)$\ \
$ (q_4,0)\to (\mbox{HALT})$.

We start with an all-blank $(0,-,e_0)$ tape in state $R = D^+(q_1,0)$ from the original machine.
We assume $|Q|+1=5$.
See Figures~\ref{fig:Q2empty}/\ref{fig:Q2empty2} for details.

\begin{theorem}\label{thm:Q2}
  $(i)$ A TM with $|Q|=n, |E|=m$ and empty initial tape can be simulated by another TM with $n'=2$, $ Q':=\{L,R\}$ and $m'= 5m(n+2)$,
  $E':= \{0,1,2,\dots,|Q|,|Q|+1\}\times \{-,L_{new},L_{old},$ $R_{new},R_{old}\}\times E$.

\begin{figure}[h!]
  \centering
  $\ba{ccccccccl}
\mbox{\rm Tr.}&\mbox{\rm Pos.}&q&e&&Q^+&E^+&D^+\\
(1)&1:&R&[0,-,0]) &\to& L&[1,R_{new},0]&L&\\
(1)&0:&L&[0,-,0]) &\to& R&[1,L_{new},0]&R&\\
(1)&1:&R&[1,R_{new},0]) &\to& L&[2,R_{new},0]&L&\\
(1)&0:&L&[1,L_{new},0]) &\to& R&[2,L_{new},0]&R&\\
\multicolumn{4}{l}{\mbox{\rm ... (until) ...}}\\
  &1:&R&[5,R_{new},0])&\\
(5)&\equiv&R&[1,L_{new},0] &\to& L&[2-1,L_{old},1]&L&\\
&0:&L&[5,L_{new},0]) &&\\
(1)&\equiv&L&[0,-,0] &\to& R&[1,L_{new},0]&R&\\
(3)&1:&R&[1,L_{old},1]) &\to& L& [0,L_{old},1]&L&\\
(2)&0:&L&[1,L_{new},0] &\to& R&[2,L_{new},0]&R&\\
(4)&1:&R&[0,L_{old},1]) &\to& {\bf R}&[0,-,1]&L&\\
(5)&0:&R&[2,L_{new},0]) &\to& R&[3-1,R_{old},2)&R&, QED^+\\
(1)&1:&R&[0,-,1]) &\to& L&[1,R_{new},1]&L&\\
(3)&0:&L&[2,R_{old},2]) &\to& R&[1,R_{old},2]&R&\\
(2)&1:&R&[1,R_{new},1]) &\to& L&[2,R_{new},1]&L&\\
(3)&0:&L&[1,R_{old},2]) &\to& R&[0,R_{old},2]&R&\\
(2)&1:&R&[2,R_{new},1]) &\to& L&[3,R_{new},1]&L&\\
(4)&0:&L&[0,R_{old},2]) &\to& {\bf L}&[0,-,2]&R&\\
(5)&1:&L&[3,R_{new},1]) &\to& R&[4-1,R_{old},3)&R&, QED^+\\
(1)&2:&R&[0,-,0]) &\to& L&[1,R_{new},0]&L&\\
(3)&1:&L&[3,R_{old},3]) &\to& R&[2,R_{old},3]&R&\\
\ea$
\caption{Two states, empty tape (part~1)\label{fig:Q2empty}}
\end{figure}


  $(ii)$ If the simulating TM does not require an empty initial tape, we have $|E'| = 5m(n+1)$, omitting symbols $[|Q|+1,...,...]$.
\end{theorem}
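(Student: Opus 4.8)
The plan is to verify Theorem~\ref{thm:Q2} by carefully justifying that the ``overflow-error'' bootstrap trick makes the construction of Section~4.3 work even when the simulating machine must itself start on a blank tape. Part~$(i)$ is really two claims: that the simulation is \emph{correct}, and that the stated symbol count $m' = 5m(n+2)$ is an upper bound. For the count, I would simply read off the symbol set: $E' := \{0,1,\dots,|Q|,|Q|+1\}\times\{-,L_{new},L_{old},R_{new},R_{old}\}\times E$ has exactly $(n+2)\cdot 5\cdot m$ elements, and $Q'=\{L,R\}$ gives $n'=2$; this is immediate. So the substance is correctness.

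For correctness I would argue by an invariant maintained on the simulator's tape. First establish the ``bootstrap phase'': starting from the all-blank configuration $^\omega(0,-,e_0)^\omega$ in state $X := D^+(q_1,0)$, the head marches away in direction $X$, at each step converting $(k,X_{new},e_0)$ into $(k{+}1,X_{new},e_0)$ by transition~(1)/(2), until the $q$-counter ``overflows'' to $|Q|+1$; by the declared equivalence $X,[|Q|{+}1,X_{new},0]\equiv X,[q_1,\ox_{new},0]$ this is reinterpreted as the initial cell of the original machine holding symbol $e_0$ in state $q_1$, while the cell on the other side, having also overflowed, is reinterpreted via $\ox,[|Q|{+}1,\ox_{new},0]\equiv\ox,[0,-,0]$ as a genuine blank. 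From that point onward I would show the invariant that, between simulated steps, the simulator tape encodes the original tape cell-for-cell in the first and third components, with exactly one cell ``active'' and the $q$-counter in transit between the active cell and its neighbor; the transitions (1)--(5) of Figure~\ref{fig:Q2} then implement one original step by the same bit-shuttling argument already used for $|Q'|=3$ and $|Q'|=2b+1$ in Sections~4.1--4.2 (transition~(5) reads $(q,e)$, writes $E^+(q,e)$, and re-launches the transfer of $Q^+(q,e)$ in direction $D^+(q,e)$). Finally, when the original machine executes $Q^+=\text{HALT}$, transition~(5) has no successor, so the simulator halts as well; thus the simulator halts iff the original does, and runs for the analogous (longer) number of steps. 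Part~$(ii)$ then follows by the same argument with the single remark that if the simulating machine is \emph{permitted} a non-blank initial cell, we place $(q_1,-,e_0)$ there directly as in Section~4.3, never invoke the overflow value $|Q|+1$, and so may drop all symbols $[|Q|{+}1,\cdot,\cdot]$, reducing the alphabet to $\{0,\dots,|Q|\}\times\{-,L_{new},L_{old},R_{new},R_{old}\}\times E$ of size $5m(n+1)$.

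I expect the main obstacle to be the bootstrap/overflow bookkeeping: one must check that the two sides of the tape overflow \emph{consistently} — the side in direction $X$ becoming the active start cell in state $q_1$, and the side in direction $\ox$ becoming an ordinary blank — and that after this reinterpretation the ordinary invariant (one active cell, counter in transit, everything else blank) is genuinely re-established, with no off-by-one in the index or spurious half-transferred counter left behind. The worked example in Figures~\ref{fig:Q2empty}/\ref{fig:Q2empty2} is exactly the evidence that pins this down, so I would lean on a direct appeal to it: exhibit the first few steps, identify where each overflow-equivalence fires, and observe that the configuration reached is precisely the ``post-bootstrap'' configuration from which the Section~4.3 correctness proof applies verbatim. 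Everything else — the cell-by-cell encoding, the shuttle transitions, the halting condition, and the arithmetic for $m'$ — is routine given the earlier subsections, so the proof can be short, citing Figures~\ref{fig:Q2}, \ref{fig:Q2empty}, \ref{fig:Q2empty2} and the already-established $|Q'|=3$ and $|Q'|=2b+1$ simulations for the repetitive parts.
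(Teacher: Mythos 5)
Your proposal is correct and follows essentially the same route as the paper, whose proof simply points to the transition table of Figure~\ref{fig:Q2} together with the overflow-equivalence bootstrap and the worked example of Figures~\ref{fig:Q2empty}/\ref{fig:Q2empty2}; you merely spell out the invariant and the symbol count $5m(n+2)$ that the paper leaves implicit.
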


\begin{proof}
  The simulating TM is given by Figure~\ref{fig:Q2}.
\end{proof}

\begin{figure}
  \centering
  $\ba{ccccccccl}
\mbox{\rm Tr.}&\mbox{\rm Pos.}&q&e&&Q^+&E^+&D^+\\
(2)&2:&R&[1,R_{new},0]) &\to& L&[2,R_{new},0]&L&\\
(3)&1:&L&[2,R_{old},3]) &\to& R&[1,R_{old},3]&R&\\
(2)&2:&R&[2,R_{new},0]) &\to& L&[3,R_{new},0]&L&\\
(3)&1:&L&[1,R_{old},3]) &\to& R&[0,R_{old},3]&R&\\
(2)&2:&R&[3,R_{new},0]) &\to& L&[4,R_{new},0]&L&\\
(4)&1:&L&[0,R_{old},3]) &\to& {\bf L}&[0,-,0]&R&\\
(5)&2:&L&[4,R_{new},0]) &\to& \multicolumn{2}{l}{\mbox{HALT}}&&, QED^+
\ea$
\caption{Two states, empty tape  (part~2)\label{fig:Q2empty2}}.
\end{figure}
\newpage
\section{Larger symbol alphabets: $n(k)$}

\subsection{The number $n(4)$}

For an alphabet with 4 symbols, Friedman gives the ``remarkable'' lower bound:

\begin{theorem} {\rm (Friedman\cite[Theorem 8.4]{F2})}
  
  $$n(4) > A^{(A(187196))}(1),$$
  
  where $A(k) = A(k,k)$ and $A^{(n)}(1) = A(A^{(n-1)}(1)), A^{(1)}\equiv A$.
\end{theorem}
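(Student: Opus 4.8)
The plan is to invoke \cite[Theorem 8.4]{F2} directly, exactly as the two $n(3)$ lower bounds above were invoked; what follows is an outline of why a bound of this form is reachable. The uniform mechanism behind every lower bound on $n(k)$ is this: to prove $n(k)\geq\ell$ it suffices to exhibit one explicit word $s\in\{1,\dots,k\}^{\ell}$ and check that it satisfies property $(*)$, i.e.\ that for no $i<j$ is the block $s_i\cdots s_{2i}$ a scattered subword of the block $s_j\cdots s_{2j}$; since $n(k)$ is by definition the length of the longest such word, any valid construction of length $\ell$ settles the inequality. The content is therefore twofold: (a) engineer a word whose length is an iterated Ackermann expression, and (b) prove that the subword-avoidance property survives at that length.

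For the growth I would follow Friedman's recursive four-letter construction, which uses the extra fourth symbol as a level marker. Blocks at scale $i$ are laid out so as to encode, in a tightly controlled way, many copies of the three-letter construction at a much smaller scale, separated by the marker; the net effect is that the admissible length at one level is essentially an $A(\cdot,\cdot)$-image of the admissible length at the previous level. Chaining the recursion through $A(187196)$ levels -- the constant $187196$ coming from a careful count of how many independent ``passes'' the base four-letter gadget permits, analogous to the way $184$ and $7198$ are extracted from small patterns in the $n(3)$ bounds above -- yields the iterated value $A^{(A(187196))}(1)$.

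The step I expect to be the real obstacle, and the reason the full proof belongs in \cite{F2} rather than here, is part (b): verifying $(*)$ for the recursively built word, i.e.\ ruling out that a later block ever absorbs an earlier one as a scattered subsequence. I would organize this as an induction on the recursion depth, maintaining the invariant that two blocks belonging to different levels are incomparable for purely structural reasons -- their marker patterns cannot embed one into the other -- while two blocks at the same level are handled by applying the inductive hypothesis to the embedded three-letter sub-construction; a case split on whether $i$ and $j$ share a level then closes the argument. Making the ``different levels are incomparable'' claim precise is the delicate bookkeeping that forces the large constant and the nested-Ackermann conclusion. For the purposes of this paper we simply record the result: see Theorem~8.4 in \cite{F2}.
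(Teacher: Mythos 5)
Your proposal is essentially the same as the paper's: the statement is proved here simply by appeal to \cite[Theorem 8.4]{F2} (the paper even notes that \cite{F2} states it without proof), which is exactly your bottom line. The intervening heuristic sketch of a four-letter recursive construction is unverified speculation and does no work, but since you explicitly defer the actual argument to \cite{F2}, the citation carries the proof just as in the paper.
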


\begin{proof}
  The statement appears in \cite[p.~7]{F2} as a Theorem, no proof given there.
\end{proof}

We have $A(1)=2, A(2)=4, A(3) = 2^{2^2} = 16$, $A(4) = A^{(3)}(1)$ (see below)
... up to $A(187196)$  to generate the exponent.

Then we do $A(187196)$ recursions, starting with $A^{(1)}(1) = 2, A^{(2)}(1) = 4,$\\
$$A^{(3)}(1) =2^{2^{\iddots^{2^2}}},
A^{(4)}(1) =A\big(2^{2^{\iddots^{2^2}}}, 2^{2^{\iddots^{2^2}}}\big),$$
each with 65536 2s stacked,...

\noindent to yield the lower bound for $n(4)$.
This number is indeed, to quote Friedman 
{\it ``\ a whole 'nother kettle of fish''} \cite[p.~7]{F2}.

\subsection{TMs for $n(k)$}

How do our TMs change?

We need two more symbols, the 4 and a 4' in part II.
Also, three new states q1-C4, q5-K4, q5-V4 deal with the additional symbol.
Hence, the $(44,8)$ implementation for $n_3$ yields a $(47,10)$ implementation for $n(4)$, and in general there is a $(35+3k,2+2k)$ implementation for computing $n(k), k\geq 3\in\nn$.
For general $k\geq 3$, $\Delta\in\{0,1\}$,
including the case $X\not\in E, \Delta = 1$, 
we have TMs of size $(35+3k+\Delta(7+k),2k+2-\Delta)$.

The (47,10) implementation immediately yields implementations with sizes (2,2450) and (3,1440) as well as  (922,2) and (353,3), where we use the somewhat crude upper bounds from Theorems~\ref{thm:Q2}, \ref{thm:Q3} and \ref{thm:Eb}.


\begin{theorem} \label{thm:cases}
  There are TMs with the following sizes to compute $n(3)$ and $n(4)$, respectively$:$

  {\rm
  \bt{cccll}
  $n$&$m$&Case&$k=3$&$k=4$\\
  \cline{1-5}
  $($2,&$\bullet)$&C1&TM(2,1840)&TM(2,2450)\\
  $($3,&$\bullet)$&C2&TM(3,1080)&TM(3,1440)\\
  $($9,&$\bullet)$&C3&TM(9,800)&TM(9,1030)\\
  $(\bullet$,&$2k+2)$&A&TM(44,8)&TM(47,10)\\
  $(\bullet$,&$2k+1)$&A&TM(54,7)&TM(58,9)\\
  $(\bullet$,&$4)$&--/A&---&TM(160,4)\\
  $(\bullet$,&$3)$&A/B&TM(155,3)&TM(353,3)\\
  $(\bullet$,&$2)$&A/B&TM(276,2)&TM(922,2)\\
  \et
  }
  
\end{theorem}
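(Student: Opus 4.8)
The plan is to assemble Theorem~\ref{thm:cases} by chaining together the explicit $(35+3k,2+2k)$ implementation for $n(k)$ with the three reduction constructions proved earlier (Theorems~\ref{thm:Eb}, \ref{thm:Q3}, \ref{thm:Q2b}, \ref{thm:Q2}), and then checking that the resulting numbers match the ones claimed in the table. For the base machine I would cite Section~2: the hand-wired $(44,8)$ for $k=3$, together with the remark that three extra states $\mathtt{q1\text{-}C4}$, $\mathtt{q5\text{-}K4}$, $\mathtt{q5\text{-}V4}$ and two extra symbols $4, 4'$ upgrade it to $(47,10)$ for $k=4$; the general $(35+3k,2+2k)$ follows the same way, and with $\Delta=1$ (dropping $X$) one gets $(35+3k+\Delta(7+k),2k+2-\Delta)$, i.e.\ the ``A'' rows $(44,8)/(54,7)$ for $k=3$ and $(47,10)/(58,9)$ for $k=4$, and $(\bullet,4)=(160,4)$ for $k=4$ via a direct hand-wiring or a small alphabet reduction.

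Next I would handle the small-symbol rows. Row $(\bullet,3)$: apply Theorem~\ref{thm:Eb} with $b=3$, $l=\lceil\log_3(2+2k)\rceil=2$, to the base $(35+3k,2+2k)$ machine; plugging $k=3$ should give the $(155,3)$ already noted as the hand-wired figure, and $k=4$ gives $(353,3)$ (here one may instead go via the $(\bullet,4)$ machine and collapse $4$ symbols to pairs of trits, ``Case B''). Row $(\bullet,2)$: apply Theorem~\ref{thm:Eb} with $b=2$, $l=3$ for $k=3$ (yielding $(276,2)$ hand-wired, and the cruder bound is the $(343+51k)$ substate count computed in the text), and $l=4$ for $k=4$ (the $(582+85k)$ count, giving $(922,2)$). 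Then the state-reduction rows: row $(9,\bullet)$ comes from Theorem~\ref{thm:Q2b} with $2b+1=9$, i.e.\ $b=4$, so $l=\lceil\log_4 n\rceil$; row $(3,\bullet)$ from Theorem~\ref{thm:Q3} directly; and row $(2,\bullet)$ from Theorem~\ref{thm:Q2}$(i)$ with $m'=5m(n+2)$. In each case I substitute $(n,m)=(35+3k,2+2k)$ (or the appropriate already-reduced machine) and verify the displayed entry.

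The bulk of the proof is therefore bookkeeping: for each of the eight table entries, state which machine is fed into which theorem, carry out the (routine) arithmetic with the stated upper-bound formula, and confirm the number in the table is $\geq$ the computed bound (the table values being convenient round numbers slightly above the formula outputs, which is legitimate since any TM with a given $(n,m)$ can be padded with unused states/symbols). I would present this as a short lemma-by-row argument, e.g.\ ``For $(2,1840)$: start from $\mathrm{TM}(44,8)$, apply Theorem~\ref{thm:Q2}$(i)$ to reach $2$ states with at most $5\cdot 8\cdot 46=1840$ symbols.'' and similarly for the others, so that the reader can check each line independently.

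The main obstacle I anticipate is not mathematical depth but consistency: making sure that whenever a row is obtained by composing two reductions (symbol reduction followed by state reduction, or vice versa), the intermediate sizes actually feed correctly into the second theorem's hypotheses, and that the order of composition is the one that yields the tabulated (smaller) number. In particular the $(2,\bullet)$ and $(3,\bullet)$ rows for $k=4$ require care about whether one first expands to $(922,2)$/$(353,3)$ and is done, or whether one reduces states first and symbols after — the text's ``Case A/B'' annotation signals exactly this branching, and the proof must pin down which branch is used for each cell. A secondary nuisance is the $\Delta$ parameter: the $X\notin E$ variant changes both coordinates, so the A-rows with $m=2k+1$ must be justified from the $\Delta=1$ count $(35+3k+\Delta(7+k))$ rather than naively from the $\Delta=0$ machine.
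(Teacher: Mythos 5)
Your proposal is essentially the paper's own proof: the paper settles the theorem in two lines, citing ``by construction'' for the case-A rows and Theorems~\ref{thm:Eb}, \ref{thm:Q2}, \ref{thm:Q3}, \ref{thm:Q2b} for cases B, C1, C2, C3 --- exactly the row-by-row bookkeeping you describe --- and your spot checks agree ($5\cdot 8\cdot 46=1840$, $5\cdot 10\cdot 49=2450$, $3\cdot 45\cdot 8=1080$, $3\cdot 48\cdot 10=1440$, $b=4$ for the $9$-state row, $582+85k$ giving $(922,2)$). One cell needs a correction in your bookkeeping: for $k=4$ and $b=3$ the base machine has $|E|=2k+2=10>9$, so $l=\lceil\log_3 10\rceil=3$, not $2$, and neither your primary route (Theorem~\ref{thm:Eb} with $l=2$ on $(47,10)$) nor your fallback (reducing the $(160,4)$ machine to trits) reproduces the tabulated $(353,3)$; that entry is most naturally obtained by applying the pairs-of-trits encoding ($l=2$) to the $9$-symbol $\Delta=1$ variant $(58,9)$ via the substate counts of Figure~\ref{fig:SweepC}, a branch your ``Case A/B'' caveat anticipates but does not pin down.
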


\begin{proof}
  By construction in case A.
  By applying Theorem~\ref{thm:Eb} in case B, Theo-rem~\ref{thm:Q2} in case C1, Theorem~\ref{thm:Q3} in case C2, and Theorem~\ref{thm:Q2b} in case C3.
\end{proof}

\section{Infeasable Busy Beaver contests}

Whenever a $(n,m)$ contest lies beyond an $(n',m')$ implementation of $n(3)$ or $n(4)$, {\it i.e.} $n\geq n', m\geq m'$, it can safely be considered infeasable.
On the other hand, all {\tt BB(1,m)} and {\tt BB(n,1)} contests are trivial.

That leaves us with the interesting cases, {\it i.e.} both feasable and non-trivial, as collected in the left column, $n(3)$, of Figure~\ref{fig:BBcases}, where we use only the cases from Theorem~\ref{thm:cases} to interpolate.
The right column, $n(4)$, already goes beyond feasability.

In summary,
all but 37022 non-trivial cases $(n,m)$ lead to ${\tt BB(n,m)} > A(7198,158386)$ and
all but 51671 cases even have ${\tt BB(n,m)} > A^{(A(178195))}(1)$.

\newpage
\section*{Conclusion}

We have shown that at most 51671 {\tt BB(n,m)} values are below $A^{(A(1178195))}(1)$.
This gives  a third description of the difficulty of the Busy Beaver problem:

\begin{figure}[b]
  \centering 
    \bt{llr}
    n&m&number\\
    \cline{1-3}
    2&2..1839&1838\\
    3..8&2..1079&6468\\
    9..43&2..799&27930\\
    44..53&2..7&60\\
    54..154&2..6&505\\
    155..275&2&221\\
&&\\
    \cline{1-3}
    \multicolumn{2}{l}{Total:}&37022\\
    \et
    \bt{|llr}
    n&m&number\\
    \cline{1-3}
    2&2..2449&2448\\
    3..8&2..1439&8628\\
    9..43&2..1029&39064\\
    47..57&2..7&66\\
    58..159&2..6&510\\
    160..352&2..3&386\\
    353..921&2&569\\
    \cline{1-3}
    \multicolumn{2}{|l}{Total:}&51671\\
    \et
    \caption{Interesting {\tt BB} cases, as bounded by $n(3)$ (left) and $n(4)$ (right)} \label{fig:BBcases}
\end{figure}

{\bf Goldbach}
  {\sc CodeGolfAddict} \cite{CGA}
  gives a (27, 2) implementation to search for a counterexample for Goldbach's conjecture that every even number is the sum of two primes.
  The value {\tt BB(27,2)} thus depends on a mathemati\-cian's (number theory, {\it not} TCS) success to solve Goldbach's conjecture.

{\bf Long Finite Sequences}
Our contribution shows that {\tt BB(44,8)} and  \linebreak
{\tt BB(276,2)} are computationally infeasable, lying beyond $A(7198,158386)$.

{\bf ZFC}
Yedidia and Aaronson \cite{Yedidia} have given a (7910,2) TM that checks ZFC for congruency  --- going back to another work of Friedman.
Hence, resolving {\tt BB(7910,2)} is outside the scope of ZFC.
Stefan O'Rear has a (748,2) implementation \cite{ORear}.

Note that Friedman's problem is a definite lower bound, while the other two problems can be put aside as holdouts that might never halt and thus not enter the {\tt BB} contest, when you have confidence in Christian Goldbach, Ernst Zermelo and Adolf Fraenkel.

We furthermore have described algorithms to obtain, for any given TM, Turing machines with either state count or symbol count freely selectable down to the value 2.

\vfill

\noindent{\tt (C++ implementations available upon request from the first author)}

\vfill
\newpage


\begin{thebibliography}{99}

\bibitem{Ackermann}
Ackermann, Wilhelm,
{\it Zum Hilbertschen Aufbau der reellen Zahlen},
Math.~Annalen {\bf 99}, S.~118--133, 1928.

\bibitem{Chaitin}
  Chaitin, Gregory,
  {\it Algorithmic Information Theory}, CUP, 1987.
  
\bibitem{CGA}
  {\sc CodeGolfAddict} (A 27-state Goldbach checker)\\ 
  {\tt   https://gist.github.com/anonymous/}
  {\tt a64213f391339236c2fe31f8749a0df6}
  
\bibitem{F}
  Friedman, Harvey, {\it ``Long Finite Sequences''}, J Comb Th, Series A {\bf 95} (1) 102-144, 2001.

\bibitem{F2}
  Friedman, Harvey, {\it ``Lecture Notes on Enormous Integers''}, 2001.\\
{\tt https://bpb-us-w2.wpmucdn.com/u.osu.edu/dist/1/1952/files/ 2014/01/EnormousInt112201-167h1l6.pdf}

\bibitem{Green}
  Green, Milton W.,
{\it ``A lower bound on Rado’s sigma function for binary Turing machines''},
Proc 5th IEEE Annual Symposium on Switching Circuit Theory and
Logical Design, 91-94, 1964.
  
\bibitem{Michel}
  Michel, Pascal,
  {\it The Busy Beaver Competition: a historical survey},\\
  {\tt https://arxiv.org/abs/0906.3749}

\bibitem{ORear}
  O'Rear, Stefan (A ZFC checker in NQL)\\
{\tt https://github.com/sorear/metamath-turing-machines}
  
  \bibitem{Rado}
  Rad\'o, T., ``{\it On Non-Computable Functions}'',
  Bell System Technical Journal, {\bf 41}(3), 877-884, 1962.
  
\bibitem{Sk} {\sc Skelet} (G.~Georgiev) (Solves {\tt BB(5,2)} except for 43  holdouts)\\
  {\tt https://skelet.ludost.net/bb/index.html}
  
\bibitem{Turing}
  Turing, Alan M.,
  {\it On computable numbers, with an application to the Entscheidungsproblem''}, LMS {\bf s2-42} (1), 230-265, 1937.
  
\bibitem{Yedidia}
Yedidia, Adam; Scott Aaronson,
{\it A Relatively Small Turing Machine Whose Behavior Is Independent of Set Theory},
  Complex Systems, {\bf 25} (4), 2016.\\
  {\tt https://content.wolfram.com/uploads/sites/13/2018/02/}
  {\tt 25-4-4.pdf}, 
  Preprint: {\tt https://arxiv.org/abs/1605.04343}
\end{thebibliography}
\end{document}